\renewcommand{\hat}{\widehat}
\newcommand{\half}{{\textstyle\frac12}}
\newcommand{\qf}[1]{{\langle{#1}\rangle}} 
\newcommand{\pf}[1]{{\langle\!\langle{#1}\rangle\!\rangle}} 
\newcommand{\ba}{\overline{\rule{2.5mm}{0mm}\rule{0mm}{4pt}}} 
\newcommand{\cf}{\mathcal{F}}
\newcommand{\cc}{\mathcal{C}}
\newcommand{\mz}{\mathbb{Z}}
\newcommand{\mq}{\mathbb{Q}}
\newcommand{\qform}[1]{{\langle{#1}\rangle}} 
\newcommand{\pform}[1]{{\langle\!\langle{#1}\rangle\!\rangle}} 
\newcommand{\fSB}[1]{\cf(#1)} 
\DeclareMathOperator{\br}{Br}
\DeclareMathOperator{\End}{End}
\DeclareMathOperator{\ad}{ad}
\DeclareMathOperator{\Ad}{Ad}
\DeclareMathOperator{\ind}{ind}
\DeclareMathOperator{\Nrd}{Nrd}
\DeclareMathOperator{\Trd}{Trd}
\DeclareMathOperator{\Int}{Int}
\DeclareMathOperator{\charac}{char}
\DeclareMathOperator{\Orth}{O}
\newtheorem{lem}{Lemma}[section]
\newtheorem{prop}[lem]{Proposition}
\newtheorem{thm}[lem]{Theorem}
\newtheorem{question}{Question}
\theoremstyle{remark}
\newtheorem{remark}[lem]{Remark}
\newtheorem{remarks}[lem]{Remarks}
\newtheorem{example}[lem]{Example}
\title[Orthogonal involutions and generic splitting]{Orthogonal
  involutions on central simple algebras and function fields of
  Severi--Brauer varieties}   
\author{Anne Qu\'eguiner-Mathieu}
\address{Universit\'e Paris 13\\
Sorbonne Paris Cit\'e\\
LAGA - CNRS (UMR 7539)\\
F-93430 Villetaneuse, France}
\email{queguin@math.univ-paris13.fr}
\thanks{The first author is grateful to the second author
  and the Universit\'e catholique de Louvain for their hospitality while the work
  for this paper was carried out.}
\author{Jean-Pierre Tignol}
\address{ICTEAM Institute, Box L4.05.01\\
Universit\'e catholique de Louvain\\
B-1348 Louvain-la-Neuve, Belgium}
\email{jean-pierre.tignol@uclouvain.be}
\thanks{The second author acknowledges support
    from the Fonds de la Recherche Scientifique--FNRS under grant
    n$^\circ$~J.0149.17.}
\keywords{Central simple algebras; involutions; generic splitting
  field; hermitian forms}  
\subjclass[2010]{Primary: 20G15; Secondary: 11E57}
\begin{document}

\begin{abstract}
An orthogonal involution $\sigma$ on a central simple algebra $A$,
after scalar extension to the function field $\fSB A$ of the
Severi--Brauer variety of $A$, is adjoint to a quadratic form
$q_\sigma$ over $\fSB A$, which is uniquely defined up to a scalar factor.
Some properties of the involution, such as hyperbolicity, and isotropy
up to an odd-degree extension of the base field, are encoded in this
quadratic form, meaning that they hold for the involution $\sigma$ if
and only if they hold for $q_\sigma$. As opposed to this, we prove
that there exists non-totally decomposable orthogonal involutions that
become totally decomposable over $\fSB A$, so that the associated form
$q_\sigma$ is a Pfister form. We also provide examples of
nonisomorphic involutions on an index $2$ algebra that yield similar
quadratic forms, thus proving that the form $q_\sigma$ does not
determine the isomorphism class of $\sigma$, even when the underlying algebra has index $2$. As a consequence, we show
that the $e_3$ invariant for orthogonal involutions is not classifying
in degree $12$, and does not detect totally decomposable involutions
in degree $16$, as opposed to what happens for quadratic forms.  
\end{abstract}

\maketitle

\section{Introduction}  

In characteristic different from $2$, every orthogonal involution on a
split central simple algebra is the adjoint of a nondegenerate
quadratic form. Therefore, the study of orthogonal involutions can be
thought of 
as an extension of quadratic form theory. Reversing the viewpoint, one
may try and reduce any question on involutions to a question on
quadratic forms by extending scalars to a splitting field of the
underlying algebra $A$. This is even more relevant as one may
generically split $A$ by tensoring with the function field $\fSB A$ of
its Severi--Brauer variety. Thus, to any orthogonal 
involution $\sigma$ on $A$, we may associate a quadratic form
$q_\sigma$ on $\fSB A$, which is unique up to a scalar factor, and
encodes properties of $\sigma$ over splitting fields of the algebra
$A$. 
 
Many properties of orthogonal involutions are preserved under field
extensions, hence transfer from $\sigma$ to  $\sigma_{\fSB A}$, and can be
translated into properties of $q_\sigma$. For instance, if the
involution $\sigma$ is isotropic or hyperbolic, so is the quadratic
form $q_\sigma$. Moreover, two conjugate involutions $\sigma$ and
$\sigma'$ yield similar quadratic forms $q_\sigma$ and
$q_{\sigma'}$. Conversely, some properties of involutions can be tracked
down by looking at the involution after scalar extension to
$\fSB A$, or equivalently at the associated quadratic forms over
this function field. For example, Karpenko proved that an
involution which is hyperbolic over $\fSB A$ already is hyperbolic
over the base field (see~\cite[Theorem~1.1]{Karphyporth}). It is
expected that the same holds for isotropy. Only a weaker result is
known in general for 
now, namely, an involution which is isotropic over $\fSB A$ also is
isotropic after an odd degree extension of the base field
(see~\cite[Theorem 1]{Karpisoorth}).  
In this work,
we consider the following property: an
involution $\sigma$ (or the algebra with involution $(A,\sigma)$) is
said to be \emph{totally decomposable} if
\begin{equation}
\label{eq:decomp}
(A,\sigma)\simeq(Q_1,\sigma_1) \otimes\cdots\otimes(Q_n,\sigma_n)
\end{equation}
for some quaternion algebras with involution $(Q_i,\sigma_i)$. Totally
decomposable involutions can be considered as an analogue of Pfister
forms in quadratic form theory. Indeed, by a theorem of
Becher~\cite{Becher}, 
if the algebra $A$ is split, an orthogonal involution on $A$ is totally
decomposable if and only if it is adjoint to a Pfister form, which
means that it admits a decomposition~\eqref{eq:decomp} in which each
quaternion factor is split. Therefore, every totally decomposable orthogonal
involution is adjoint to a Pfister form after generic splitting of the
underlying algebra. Whether the converse holds is a classical question,
raised in~\cite[\S\,2.4]{BPQ}:

\begin{question}
  \label{q:dec}
  Let $\sigma$ be an orthogonal involution on a central simple algebra
  $A$ of $2$-power degree. Suppose $q_\sigma$ is similar to a Pfister
  form, i.e., $\sigma_{\fSB A}$ is totally decomposable. Does it
  follow that $\sigma$ is totally decomposable ?
\end{question}

If $\deg A=4$ or $8$, cohomological invariants can be used to give a
positive answer.
In Section~\ref{Pfister.section} we give examples
showing that the answer is negative in degree~$16$ and index $4$ and
$8$. 
We even prove a slightly stronger result, namely the involutions in our examples remain non-totally decomposable over any 
odd degree extension of the base field. 
We thus disprove a conjecture of Garibaldi~\cite[Conjecture
(16.1)]{G}. See \S\ref{sec:answer} for a summary of results on
Question~\ref{q:dec}. 

The more general problem underlying questions of the type above is to
determine how much information on the involution is lost when the
algebra is generically split. In~\cite{dCQMZ}, it is proved that if
orthogonal involutions $\sigma$ and $\sigma'$ on a central simple
algebra $A$ are motivic equivalent over $\fSB A$, in an appropriate
sense which amounts to motivic equivalence of $q_\sigma$ and
$q_{\sigma'}$, then they already are motivic equivalent over the base
field. We address the analogous property for isomorphism:

\begin{question}
  \label{q:iso}
  Let $\sigma$, $\sigma'$ be orthogonal involutions on a central
  simple algebra $A$. Suppose $q_\sigma$ and $q_{\sigma'}$ are
  similar, i.e., $\sigma_{\fSB A} \simeq \sigma'_{\fSB A}$. Does it
  follow that $\sigma\simeq\sigma'$?
\end{question}

As for Question~\ref{q:dec}, cohomological
invariants yield a  positive answer for algebras of low
degree. 
The answer is also known to be positive when the algebra $A$ is
  split, hence $\fSB A$ is purely transcendental over the base field,
  see \cite[Cor.~3.2]{Hoffmann:similarity}.
Examples showing that the 
answer to Question~\ref{q:iso} is negative in degree~$8$ and index~$4$
or $8$ were provided in~\cite[\S 4]{QT:Arasondeg8}. The case of
index~$2$ is very specific however, as demonstrated by 
Becher's theorem on total decomposability (see
Proposition~\ref{dec:prop}), and because function fields of 
conics are excellent (see~\cite{PSS}). Many questions that are still
open in general are solved in index $2$, using this peculiarity. For
instance, it is known that an orthogonal involution on a central
simple algebra $A$ of index~$2$ that becomes isotropic over $\fSB A$
is isotropic over the base field, see~\cite{PSS}.
Therefore, we restrict in Section~\ref{index2.section} to the case
where the index is~$2$. We identify a few cases where the answer is
positive, but we construct examples in 
degree~$8$ and $12$ showing that the answer is negative in
general: we exhibit skew-hermitian forms of rank~$4$ or $6$ over a
quaternion algebra $Q$ that are similar after scalar extension to the
function field $\fSB Q$ of the corresponding conic, even though they
are not similar over $Q$. 
Since the group of isometries $\Orth(h)$ of
a skew-hermitian form $h$ (which is a classical group of orthogonal
type) determines the form up to similarity, we thus have
skew-hermitian forms $h$, $h'$ such that $\Orth(h)_{\fSB Q} \simeq
\Orth(h')_{\fSB Q}$ even though $\Orth(h)\not\simeq\Orth(h')$. 
By
contrast, if $h_{\fSB Q}\simeq h'_{\fSB Q}$ then $h\simeq h'$ because
scalar extension to $\fSB Q$ induces an injective map on Witt groups
of skew-hermitian forms, as shown by Dejaiffe~\cite{Dej} and
Parimala--Sridharan--Suresh~\cite[Prop.~3.3]{PSS}.
We refer to~\S\ref{sec:answer} for a summary of results on
Question~\ref{q:iso}. 

In addition, our examples have interesting consequences on
cohomological invariants of orthogonal involutions.  
It is well known that the degree $1$ and degree $2$ invariants,
respectively related to discriminants and Clifford algebras, are
defined in a similar way for quadratic forms and for involutions and
share analogous properties regarding classification
and decomposability criteria in both settings
(see~\cite{Tignol:hyderabad} for precise statements). Using the
so-called Rost invariant, which assigns a degree $3$ cohomology class
to any torsor under some absolutely almost simple simply connected
algebraic group, one may also extend the so-called Arason invariant of
quadratic form theory to orthogonal involutions, under some additional
conditions on the involution and on the underlying
algebra. Nevertheless, our examples show that the Arason
invariant does not have the 
same properties in both settings. In particular, it is not classifying
for orthogonal involutions on a degree $12$ and index $2$ algebra,
proving that Proposition~4.3 in~\cite{QT:ArasonDocumenta} is optimum
(cf. Remark~\ref{e3ind2.rem} (ii)). Moreover, it vanishes for a
$16$-dimensional quadratic form if and only if it is similar to a
Pfister form, but it may vanish for a non-totally decomposable
involution on a degree~$16$ central simple algebra (see
Remark~\ref{e3td.rem}).   

All the examples in Sections~\ref{Pfister.section} and
\ref{index2.section} have in common the use of the so-called `generic
sum' construction,  which was introduced in~\cite{QT:outer}, and
used there to construct examples of algebraic
groups without outer automorphisms. 
Questions~\ref{q:dec} and \ref{q:iso} turn out to be related in a
somewhat unexpected way: the
examples disproving Garibaldi's conjecture are generic sums of
involutions that provide a negative answer to
Question~\ref{q:iso}. To explain this relation, we develop 
a notion of `unramified algebra with involution' in
Section~\ref{Pfister.section}. 

The second-named author is grateful to Sofie Beke and Jan Van Geel for
motivating discussions on the totally decomposable case in
Theorem~\ref{thm:iso}\eqref{item:isod}.  

\subsection{Notations} 
\label{notation.section}

All fields in this paper have characteristic different from $2$; in
particular, the valued fields we consider are non-dyadic. We generally
use the notation and terminology in \cite{KMRT}, to which we refer for
background information on central simple algebras with involution. In
particular, for $n$-fold Pfister forms we write
\[
\pf{a_1,\ldots,a_n}=\qf{1,-a_1}\cdots\qf{1,-a_n}.
\]

By an algebra with involution, we always mean a central simple algebra
with an involution of the first kind. If $(D,\gamma)$ is a division
algebra with involution, and $h\colon\,M\times M\rightarrow D$ is a
nonsingular hermitian 
or skew-hermitian form with respect to $\gamma$, we let $\Ad(h)$ denote
the algebra with involution $\bigl(\End_DM,\ad(h)\bigr)$, where
$\ad(h)$ is the adjoint involution of the form $h$. In
particular, for every nonsingular
$r$-dimensional quadratic form $q$ over a field $k$, $\Ad(q)$ stands for
$\bigl(\End_k(k^r),\ad(q)\bigr)$. Recall from \cite[\S4.A]{KMRT} that
every algebra with involution can be represented as $\Ad(h)$ for some
nonsingular hermitian or skew-hermitian form. The algebra with
involution $\Ad(h)$ (or the involution $\ad(h)$) is said to be
\emph{isotropic} (resp.\ \emph{hyperbolic}) if the form $h$ is
isotropic (resp.\ hyperbolic). 

The group of \emph{similarity factors} of an algebra with involution
$(A,\sigma)$ over a field $k$ is defined as
\[
G(A,\sigma) = \{\mu\in k^\times\mid \mu=\sigma(g)g\text{ for some
  $g\in A$}\}.
\]
For a nonsingular hermitian or skew-hermitian form $h$, we write
$G(h)$ for $G\bigl(\Ad(h)\bigr)$. This group has the following
alternative description:
\[
G(h)=\{\mu\in k^\times\mid \qf\mu \cdot h \simeq h\}.
\]

\subsection{Cohomological invariants}
\label{cohinv.section}
Every $k$-algebra with orthogonal involution $(A,\sigma)$ of even
degree has a discriminant (see \cite[\S7.A]{KMRT})
\[
d_\sigma=e_1(\sigma)\in k^\times/k^{\times2},
\] 
which defines a first cohomological invariant of $\sigma$. The
discriminant also 
defines a quadratic \'etale algebra $Z=k[X]/(X^2-d_\sigma)$,
which we also call the discriminant of $\sigma$ for short. If $\sigma$
has trivial discriminant, its Clifford algebra $\cc(\sigma)$ is a
direct product of  
two central simple algebras $\cc_+(\sigma)\times \cc_-(\sigma)$ whose
Brauer classes 
satisfy $[\cc_+(\sigma)]-[\cc_-(\sigma)]=[A]$ (see~\cite[\S9.C]{KMRT} or
\cite[\S\,3.4]{Tignol:hyderabad}); 
the $e_2$ invariant of $\sigma$ has values in the quotient of the
Brauer group $\br(k)$ by the subgroup generated by the Brauer class
of~$A$; it is defined as the image of $[\cc_+(\sigma)]$ or
$[\cc_-(\sigma)]$: 
\[
e_2(\sigma)=[\cc_+(\sigma)]+\qform{[A]}=
[\cc_-(\sigma)]+\qform{[A]}\in\br(k)/\qform{[A]}. 
\] 
If $\sigma$ has trivial $e_1$ and $e_2$ invariants and the coindex of
$A$ is even (i.e., $\ind A$ divides $\half\deg A$), one may define
an Arason invariant  
\[
e_3(\sigma)\in H^3(k,\mu_4^{\otimes2})/k^\times\cdot[A],
\] 
by using the Rost invariant of Spin groups
(see~\cite[\S3.5]{Tignol:hyderabad}).
 
Since $k$ is quadratically closed in $\fSB A$, and the kernel of the
scalar extension map $\br(k)\rightarrow\br(\fSB A)$ is $\qform{[A]}$
by a theorem of Amitsur, the $e_1$ and the $e_2$ invariants of an
involution are trivial if and only if they are trivial over
  $\fSB A$, and for orthogonal involutions $\sigma$,
$\sigma'$ on $A$ we have $e_1(\sigma)=e_1(\sigma')$ (resp.\
$e_2(\sigma)=e_2(\sigma')$) if and only if $e_1(\sigma_{\fSB
  A})=e_1(\sigma'_{\fSB A})$ (resp.\ $e_2(\sigma_{\fSB
  A})=e_2(\sigma'_{\fSB A})$). The same applies to the Arason invariant
if either $A$ has index $\leq 4$, or $A$ is Brauer-equivalent to a
tensor product of three quaternion algebras, but fails in general
(see~\cite{K:triquat}, \cite{K:torsion} and \cite{Peyre}).  

When the discriminant of the orthogonal involution $\sigma$ is not
trivial (i.e., when $Z$ is a field), the Clifford algebra
$\cc(\sigma)$ is a central simple $Z$-algebra. The $k$-isomorphism
class of $\cc(\sigma)$ has the same property as the $e_1$ and $e_2$
invariants:

\begin{lem}
  \label{lem:clif}
  Let $\sigma$, $\sigma'$ be orthogonal involutions on a central
  simple $k$-algebra $A$ of even degree. If $\sigma_{\fSB A}\simeq
  \sigma'_{\fSB A}$, then $\cc(\sigma)$ and $\cc(\sigma')$ are
  isomorphic as $k$-algebras.
\end{lem}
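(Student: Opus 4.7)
The plan is to reduce to the $e_1$ and $e_2$ discussion above when the discriminant is split, and otherwise combine Amitsur's theorem with the fundamental relation between $[\cc(\sigma)]$ and $[A_Z]$ in $\br(Z)$. Since $\sigma_{\fSB A}\simeq\sigma'_{\fSB A}$ yields $e_1(\sigma)=e_1(\sigma')$ by the remarks preceding the lemma, the Clifford algebras $\cc(\sigma)$ and $\cc(\sigma')$ share a common center $Z$ and have equal $k$-dimension.

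If $Z\simeq k\times k$, decompose $\cc(\sigma)=\cc_+(\sigma)\times\cc_-(\sigma)$ with both factors of degree $2^{m-1}$ (where $\deg A=2m$), and similarly for $\sigma'$. The same remarks give $e_2(\sigma)=e_2(\sigma')$ in $\br(k)/\langle[A]\rangle$; combined with the fundamental relation $[\cc_+]+[\cc_-]=[A]$, this forces the unordered pair $\{[\cc_+(\sigma)],[\cc_-(\sigma)]\}$ to equal $\{[\cc_+(\sigma')],[\cc_-(\sigma')]\}$. Since central simple $k$-algebras of fixed degree are classified by their Brauer class, we conclude $\cc(\sigma)\simeq\cc(\sigma')$ as $k$-algebras.

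If $Z$ is a field, let $F:=Z\otimes_k\fSB A$; this is a field (since $k$ is quadratically closed in $\fSB A$), canonically identified with the function field of $\mathrm{SB}(A_Z)$ over $Z$. The isomorphism $\cc(\sigma_{\fSB A})\simeq\cc(\sigma'_{\fSB A})$ of $\fSB A$-algebras restricts on the common center $F$ to either the identity or the nontrivial $\fSB A$-automorphism $\iota$ extending $\mathrm{Gal}(Z/k)$; since $\iota^*\cc(\sigma')$ and $\cc(\sigma')$ coincide as $k$-algebras, we may assume the restriction is the identity, so that $[\cc(\sigma)_F]=[\cc(\sigma')_F]$ in $\br(F)$. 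Amitsur's theorem over $Z$ then gives $[\cc(\sigma)]-[\cc(\sigma')]\in\{0,[A_Z]\}$ in $\br(Z)$. The first case yields $\cc(\sigma)\simeq\cc(\sigma')$ as $Z$-algebras. In the second, the fundamental relation $[\cc(\sigma')]+\iota^*[\cc(\sigma')]=[A_Z]$ from \cite[(9.14)]{KMRT} suffices to conclude: when $m$ is even the canonical involution on $\cc(\sigma')$ is of the first kind, so $[\cc(\sigma')]$ is $2$-torsion and $\iota^*[\cc(\sigma')]=[\cc(\sigma')]+[A_Z]=[\cc(\sigma)]$; when $m$ is odd the canonical involution is unitary, giving $\iota^*[\cc(\sigma')]=-[\cc(\sigma')]$ and forcing $[A_Z]=0$, which reduces to the first subcase.

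The main obstacle is the Galois-theoretic bookkeeping in the nontrivial-discriminant case: the two sources of ambiguity (from the induced action on centers, and from the $[A_Z]$-kernel in Amitsur) must be controlled via the parity-dependent structure of the canonical involution on the Clifford algebra, without which a naive Brauer-class comparison would fall short of a $k$-algebra isomorphism.
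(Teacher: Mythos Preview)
Your argument in the split-discriminant case is fine and matches the paper's. In the field case, however, the $m$ odd subcase contains a genuine error. You invoke the relation $[\cc(\sigma')]+\iota^*[\cc(\sigma')]=[A_Z]$ as if it held in all parities, and then combine it with $\iota^*[\cc(\sigma')]=-[\cc(\sigma')]$ to deduce $[A_Z]=0$. But the fundamental relations in \cite[(9.12)]{KMRT} say that the corestriction $N_{Z/k}[\cc(\sigma')]$ equals $[A]$ when $m$ is even and is \emph{trivial} when $m$ is odd; thus for $m$ odd one only has $[\cc(\sigma')]+\iota^*[\cc(\sigma')]=0$, and your conclusion $[A_Z]=0$ does not follow. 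The subcase can be repaired using the \emph{other} relation valid for $m$ odd, namely $2[\cc(\sigma')]=[A_Z]$: then $[\cc(\sigma)]=[\cc(\sigma')]+[A_Z]=3[\cc(\sigma')]=-[\cc(\sigma')]=\iota^*[\cc(\sigma')]$ (since $4[\cc(\sigma')]=2[A_Z]=0$), so $\cc(\sigma)\simeq{}^\iota\cc(\sigma')$ over $Z$, hence $\cc(\sigma)\simeq\cc(\sigma')$ over $k$.

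For comparison, the paper's proof of the field case avoids the parity split entirely: it extends scalars to $Z$, where the discriminant becomes trivial, applies the already-proved split-discriminant case to $(A_Z,\sigma_Z)$ and $(A_Z,\sigma'_Z)$, and then reads off the conclusion from the decomposition $\cc(\sigma_Z)\simeq\cc(\sigma)\times{}^\iota\cc(\sigma)$. This is shorter and sidesteps the delicate bookkeeping with the fundamental relations; your route, once corrected, is a legitimate alternative but needs more care with which relation holds in which parity.
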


\begin{proof}
  As observed above, the isomorphism $\sigma_{\fSB A}\simeq
  \sigma'_{\fSB A}$ implies that $e_1(\sigma)=e_1(\sigma')$, hence the
  centers $Z$ and $Z'$ of $\cc(\sigma)$ and $\cc(\sigma')$ are
  $k$-isomorphic. If $e_1(\sigma)=e_1(\sigma')=0$, then $Z\simeq
  Z'\simeq k\times k$, and
  \[
  \cc(\sigma)\simeq\cc_+(\sigma)\times\cc_-(\sigma),\qquad
  \cc(\sigma')\simeq\cc_+(\sigma')\times\cc_-(\sigma').
  \]
  From $e_2(\sigma)=e_2(\sigma')$, it follows that $\cc_+(\sigma)$ is
  isomorphic to $\cc_+(\sigma')$ or $\cc_-(\sigma')$, hence
  $\cc(\sigma)\simeq_k\cc(\sigma')$. 

  For the rest of the proof, assume
  $Z$ is a field, and choose an arbitrary isomorphism $Z\simeq Z'$ to
  identify $Z'$ with $Z$. After scalar extension to $Z$ we have
  $e_1(\sigma_Z)=e_1(\sigma'_Z)=0$, hence the first part of the proof
  yields $\cc(\sigma_Z)\simeq\cc(\sigma'_Z)$ as $Z$-algebras. But
  letting $^\iota\cc(\sigma)$ denote the conjugate $Z$-algebra of
  $\cc(\sigma)$ under the nontrivial $k$-automorphism $\iota$ of
  $Z/k$, we have
  \[
  \cc(\sigma_Z)=\cc(\sigma)\otimes_kZ\simeq
  \cc(\sigma)\times{}^\iota\cc(\sigma)  \quad\text{and likewise}\quad
  \cc(\sigma'_Z)\simeq\cc(\sigma')\times{}^\iota\cc(\sigma').
  \]
  Therefore, from $\cc(\sigma_Z)\simeq\cc(\sigma'_Z)$ it follows that
  $\cc(\sigma)\simeq\cc(\sigma')$ or $^\iota\cc(\sigma')$ as
  $Z$-algebras, hence $\cc(\sigma)\simeq\cc(\sigma')$ as $k$-algebras.
\end{proof}

\subsection{Synopsis of results on Questions~\ref{q:dec} and
  \ref{q:iso}}
\label{sec:answer}

Question~\ref{q:dec} has a positive answer if either the degree or the
index of the algebra is small enough. More precisely, we have the
following:  
\begin{prop}
  \label{dec:prop}
  Let $(A,\sigma)$ be a central simple algebra with orthogonal
  involution over a field $k$. Suppose
  $\sigma_{\fSB A}$ is totally decomposable. If in addition we have either 
   $\deg A=4$ or $8$,
 or  $\ind A=2$,
   then $\sigma$ is totally decomposable.
\end{prop}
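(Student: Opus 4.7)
My plan is to handle the three conditions in the hypothesis separately, as each requires different tools.

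\textbf{Case $\deg A = 4$.} By Becher's theorem~\cite{Becher}, applied over $\fSB A$ where $A$ splits, $q_\sigma$ is similar to a $2$-fold Pfister form, so $e_1(q_\sigma)=0$; hence $e_1(\sigma_{\fSB A})=0$. By the transfer property for $e_1$ recalled in \S\ref{cohinv.section}, $e_1(\sigma)=0$. The classical decomposition theorem for degree $4$ algebras with orthogonal involution of trivial discriminant (see, for instance,~\cite[(15.12)]{KMRT}) then yields $(A,\sigma)\simeq(Q_1,\sigma_1)\otimes(Q_2,\sigma_2)$.

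\textbf{Case $\deg A = 8$.} The same reasoning shows that $q_\sigma$ is similar to a $3$-fold Pfister form, so $e_1(q_\sigma)=e_2(q_\sigma)=0$; by the transfer properties of \S\ref{cohinv.section}, $e_1(\sigma)=e_2(\sigma)=0$. When $\ind A\le 4$, the Arason invariant $e_3(\sigma)$ is defined and transfers (modulo $k^\times\cdot[A]$) to $e_3(q_\sigma)$, which is the symbol of the Pfister form. A classification result characterizing degree $8$ totally decomposable involutions with trivial $e_1,e_2$ by the symbol condition on $e_3$ then yields the decomposition. The subcase $\ind A=8$, where $A$ is a division algebra of coindex~$1$, is the \emph{main obstacle}: here $e_3(\sigma)$ is not defined and no cohomological transfer is available. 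One must argue structurally that the existence of a $3$-fold Pfister form in the similarity class of $q_\sigma$ forces $A$ to be Brauer-equivalent to a tensor product of three quaternion algebras, reducing to a previously handled subcase.

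\textbf{Case $\ind A=2$.} Represent $(A,\sigma)=\Ad(h)$ for a nondegenerate skew-hermitian form $h$ over the quaternion division algebra $Q$ Brauer-equivalent to $A$. Over $\fSB A=\fSB Q$, Morita equivalence identifies $h$ with a quadratic form similar to $q_\sigma$, which by hypothesis is similar to a Pfister form. Since $\fSB Q/k$ is an excellent field extension (\cite{PSS}) and the scalar extension on Witt groups of skew-hermitian forms $W^-(Q)\to W(\fSB Q)$ is injective by Dejaiffe~\cite{Dej} and Parimala--Sridharan--Suresh~\cite{PSS}, one descends the Pfister form over $\fSB Q$ to a Pfister form $\pi$ over $k$, and identifies $h$ (up to scalar) as a tensor product of a $1$-dimensional skew-hermitian form over $Q$ with $\pi$. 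This yields a tensor product decomposition of the form $(A,\sigma)\simeq(Q,\sigma_Q)\otimes\Ad(\pi)$, concluding the proof.
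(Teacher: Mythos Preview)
Your degree~$4$ argument is fine and matches the paper's intent. The problem is the degree~$8$ case. You bring in $e_3$, but the cohomological criterion you actually need (and which the paper cites as \cite[Prop.~2.10]{BPQ}) is that in degree~$8$ an orthogonal involution is totally decomposable \emph{if and only if} $e_1(\sigma)=0$ and $e_2(\sigma)=0$; the Arason invariant plays no role in detecting decomposability here. Since both $e_1$ and $e_2$ transfer back from $\fSB A$ (as recalled in \S\ref{cohinv.section}), the case $\ind A=8$ is not an obstacle at all: the argument is uniform in the index. Your proposed workaround for $\ind A=8$---forcing $A$ to be a product of three quaternion algebras---does not reduce to a ``previously handled subcase'' (the index would still be $8$), and in any case the implication you assert is not established.

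For $\ind A=2$, the paper simply invokes Becher's theorem~\cite[Th.~2]{Becher}. What you wrote is a rough outline of how one might try to reprove that theorem via excellence of $\fSB Q$ and Witt-injectivity, but the descent step (``one descends the Pfister form over $\fSB Q$ to a Pfister form $\pi$ over $k$'') is exactly the nontrivial content and is not justified in your sketch; excellence controls the anisotropic part of a form defined over $k$, not the descent of an arbitrary Pfister form over $\fSB Q$. Citing Becher is both correct and sufficient here.
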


\begin{proof}
The first assertion follows easily from the cohomological criteria of decomposability that can be established in these degrees, see~\cite[Proposition 2.10]{BPQ}. The index $2$ case is a theorem of Becher \cite[Th.~2]{Becher}.
\end{proof} 
As opposed to this, the answer is negative in general in degree $16$
and index $\geq 4$. More precisely, as a result of Theorem~\ref{thm:exgendec}
below, we obtain the following:
\begin{thm}
\label{dec:thm}
There exist central simple algebras with orthogonal involution
$(A,\sigma)$ satisfying all the following conditions:  
\begin{enumerate} 
\item[(i)] 
$\deg A=16$ and $\ind A=4$ or $8$, and
\item[(ii)] 
$\sigma_{\fSB A}$ is totally decomposable, and
\item[(iii)] $\sigma$ is not totally decomposable, and remains so over
  any odd degree extension of the base field.  
\end{enumerate}
\end{thm}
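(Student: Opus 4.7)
The plan is to deduce the theorem from the more precise construction given in Theorem~\ref{thm:exgendec} below, following the strategy suggested in the introduction: the desired examples should be generic sums (in the sense of \cite{QT:outer}) of two orthogonal involutions that give a negative answer to Question~\ref{q:iso} strongly enough to survive odd-degree extensions of the base field.

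For the input data, I would follow \cite[\S4]{QT:Arasondeg8} and fix a central simple $k$-algebra $B$ of degree $8$ with $\ind B \in \{4,8\}$ together with two orthogonal involutions $\sigma_1,\sigma_2$ on $B$ such that $\sigma_{1,\fSB B} \simeq \sigma_{2,\fSB B}$ while $\sigma_{1,L} \not\simeq \sigma_{2,L}$ for every odd-degree extension $L/k$. Applying the generic sum construction would then produce $(A,\sigma)$ with $A$ of degree $16$ and Brauer-equivalent to $B$, which yields~(i). For~(ii), since $[A]=[B]$ in $\br(k)$, Amitsur's theorem forces $\fSB A$ to split $B$, and the stable birational equivalence $\fSB A \sim \fSB B$ upgrades the hypothesis to $\sigma_{1,\fSB A}\simeq\sigma_{2,\fSB A}$; by the design of the generic sum, the sum of two isomorphic involutions acquires a total decomposition over this function field, establishing~(ii).

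The main obstacle is~(iii). The strategy is to argue by contradiction: a total decomposition of $\sigma_L$ over an odd-degree extension $L/k$ must be shown to force $\sigma_{1,L'} \simeq \sigma_{2,L'}$ over some odd-degree extension $L'/k$, contradicting the choice of inputs. For this, the paper develops the notion of \emph{unramified algebra with involution} in Section~\ref{Pfister.section}, which provides residue maps at suitable discrete valuations on the function field parametrizing the generic sum. Applied to a hypothetical total decomposition of $\sigma_L$, these residues should reconstruct an isomorphism between $\sigma_1$ and $\sigma_2$ over an odd-degree extension of $k$. Setting up this residue theory so that total decomposability descends in a controlled fashion, and checking that the odd-degree hypothesis is preserved at each step, will be the technical heart of the argument.
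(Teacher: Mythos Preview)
Your plan is the paper's: reduce to Theorem~\ref{thm:exgendec}, whose input is the pair $(\sigma_1,\sigma_2)$ from \cite[\S4]{QT:Arasondeg8} and whose output is the generic sum $\Ad(h)$ over $F=k((t))$. Two points need tightening.

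First, keep track of base fields. The generic sum $(A,\sigma)$ lives over $F=k((t))$, not over $k$; so one has $[A]=[\hat D]$ in $\br(F)$ (with $\hat D=D_F$), the odd-degree extensions in~(iii) are extensions of $F$, and the residue machinery of Section~\ref{Pfister.section} passes from an odd-degree $K/F$ to its residue field $\overline K$, which is then odd-degree over $k$. The paper's reduction for~(ii) is not a stable birational statement between $\fSB A$ and $\fSB B$ (which live over different bases) but simply the observation that $\fSB A$ is an extension of $\fSB{\hat D}$, so total decomposability over $\fSB{\hat D}$ (established in Theorem~\ref{thm:exgendec}) persists over $\fSB A$.

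Second, and more substantively: your claim ``the sum of two isomorphic involutions acquires a total decomposition over this function field'' is false as stated. What makes it work in Theorem~\ref{thm:exgendec} is that the $\sigma_i$ from \cite{QT:Arasondeg8} are themselves \emph{totally decomposable}, so over $\fSB D$ each $h_i$ becomes a scalar multiple of one and the same $3$-fold Pfister form $\varphi$; then $h_{\fSB{\hat D}}\simeq\qform{\alpha_1,\alpha_2 t}\varphi_{\fSB{\hat D}}$ is a multiple of a $4$-fold Pfister form. Mere isomorphism $\sigma_{1,\fSB B}\simeq\sigma_{2,\fSB B}$ without the Pfister condition would only give $h_{\fSB{\hat D}}$ similar to $\qform{1,t}\otimes q$ for some $8$-dimensional $q$, which need not be Pfister. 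You should state this hypothesis explicitly.
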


The algebra with involution $(A,\sigma)$ of Theorem~\ref{thm:exgendec}
has the form $\Ad(h)$ for some hermitian form $h$ over a division
algebra $\hat D$; it is totally decomposable over $\fSB{\hat D}$ and
satisfies~(i) and (iii). To derive Theorem~\ref{dec:thm} from
Theorem~\ref{thm:exgendec}, it suffices to observe that $\fSB A$ may
be viewed as an extension of $\fSB{\hat D}$, hence the $(A,\sigma)$ of
Theorem~\ref{thm:exgendec} is also totally decomposable over $\fSB A$.
  
\begin{remark}
\label{e3td.rem}  
Any algebra with orthogonal involution satisfying conditions (i) and (ii) above satisfies 
$
e_1(\sigma_{\fSB A})=e_2(\sigma_{\fSB A})=e_3(\sigma_{\fSB A})=0
$
because $\sigma_{\fSB A}$ is the adjoint involution of a $4$-fold
Pfister form. Since the algebra $A$ in Theorem~\ref{thm:exgendec} is
Brauer-equivalent to a tensor product of three quaternion algebras, we have in addition for those examples 
\begin{equation}
\label{eq:ezero}
e_1(\sigma)=e_2(\sigma)=e_3(\sigma)=0.
\end{equation}
Therefore, as opposed to what happens for quadratic forms and in smaller degree, the vanishing of the first cohomological invariants does not characterize totally decomposable involutions in degree $16$. 
That is, the examples in Theorem~\ref{thm:exgendec} disprove
Conjecture~(16.1) in~\cite{G}. 
Note however that if $\deg A=16$ and
$\ind A=2$, then by Proposition~\ref{dec:prop}
the involution $\sigma$ is 
totally decomposable if and only if~\eqref{eq:ezero} holds, because
\eqref{eq:ezero} is equivalent to the condition that $\sigma_{\fSB A}$
be totally decomposable.
\end{remark}

Question~\ref{q:iso} also has a positive answer if the algebra has small enough degree, or under some additional condition on the algebra and on the involution. More precisely, we have the following: 
\begin{thm}
  \label{thm:iso}
  Let $\sigma$ and $\sigma'$ be orthogonal involutions on a central
  simple algebra $A$ over a field $k$, and assume $\sigma_{\fSB
    A}\simeq\sigma'_{\fSB A}$. If in addition any of the following
  conditions holds: 
  \begin{enumerate}
\renewcommand{\theenumi}{\alph{enumi}}
  \item\label{item:isob}
  $\deg A=2$ or $4$, or
  \item\label{item:isoc}
  $\deg A=6$ and $e_1(\sigma)=0$, or
  \item\label{item:isod}
  $\ind A=2$ and $\sigma$ is totally decomposable, or
  \item\label{item:isoe}
  $\deg A\equiv2\bmod4$ and there exists a quadratic field extension
  of $k$ over which $(A,\sigma)$ is split and hyperbolic,
  \end{enumerate}
then $\sigma\simeq\sigma'$.
\end{thm}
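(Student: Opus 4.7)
The plan is to treat the four cases separately. Cases~\eqref{item:isob} and \eqref{item:isoc} are handled by classifying orthogonal involutions via Clifford algebras; case~\eqref{item:isod} combines Becher's theorem with the injectivity of the Witt-group restriction $W^-(Q)\hookrightarrow W(\fSB Q)$ proved by Dejaiffe and Parimala--Sridharan--Suresh; case~\eqref{item:isoe} is the most delicate and couples a descent of hyperbolicity with the structure of the kernel of $W^-(Q)\to W(L)$.

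For cases~\eqref{item:isob} and \eqref{item:isoc}, Lemma~\ref{lem:clif} gives a $k$-algebra isomorphism $\cc(\sigma)\simeq\cc(\sigma')$, and this suffices to conclude thanks to the classical classification of orthogonal involutions by the Clifford algebra in these small degrees: in degree~$2$, through the discriminant (which is the center of the Clifford algebra); in degree~$4$, through the full Clifford algebra, by the exceptional isomorphism $D_2\simeq A_1\times A_1$ (see~\cite[\S\,15]{KMRT}); and in degree~$6$ with trivial discriminant, through the unordered pair of degree~$4$ Clifford components $\{\cc_+(\sigma),\cc_-(\sigma)\}$, by the exceptional isomorphism $D_3\simeq A_3$.

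For case~\eqref{item:isod}, the similitude $\sigma_{\fSB A}\simeq\sigma'_{\fSB A}$ combined with the total decomposability of $\sigma$ implies that $\sigma'_{\fSB A}$ is also totally decomposable, so Proposition~\ref{dec:prop} yields that $\sigma'$ itself is totally decomposable. Writing $A\sim Q$ with $Q$ quaternion, Becher's theorem produces hermitian Pfister forms $\pi,\pi'$ over $Q$ with $(A,\sigma)=\Ad(\pi)$ and $(A,\sigma')=\Ad(\pi')$. Scalar extension to $\fSB Q=\fSB A$ and Morita equivalence turn these into quadratic Pfister forms, which must then be similar, hence isometric by the multiplicativity of Pfister forms (every similarity factor of a Pfister form is represented). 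Pulling back through Morita yields $\pi_{\fSB Q}\simeq\pi'_{\fSB Q}$ as hermitian forms, and the Dejaiffe--Parimala--Sridharan--Suresh injectivity together with equality of ranks gives $\pi\simeq\pi'$ over $k$, and so $\sigma\simeq\sigma'$.

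For case~\eqref{item:isoe}, since $A$ is split over a quadratic extension~$L$, the index of $A$ divides~$2$; if $A$ itself is split, the conclusion is Hoffmann's result~\cite[Cor.~3.2]{Hoffmann:similarity}, so assume $A\sim Q$ for the quaternion algebra $Q$ split by $L=k(\sqrt d)$, and write $(A,\sigma)=\Ad(h)$, $(A,\sigma')=\Ad(h')$ for skew-hermitian forms $h,h'$ of odd rank $m=(\deg A)/2$ over $Q$. The argument has three substeps. First, I would show $\sigma'_L$ is also hyperbolic: the field $\fSB{A_L}=L\cdot\fSB A$ is purely transcendental over $L$ since $A_L$ is split, so hyperbolicity of $\sigma_L$ propagates to $\fSB{A_L}$, transfers via the similitude to $\sigma'_{\fSB{A_L}}$, and descends to $\sigma'_L$ because hyperbolicity of orthogonal involutions descends along rational extensions. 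Second, I would apply the description of the kernel of $W^-(Q)\to W(L)$ (which for a trace-$0$ element $j\in Q$ with $j^2=d$ is generated by $\qf j\cdot W(k)$, since $\qf j_L$ has for Morita image over $L$ the hyperbolic Pfister form $\pf{d}_L$) to decompose $h=\qf j\cdot q_0$ and $h'=\qf j\cdot q_0'$ for some quadratic forms $q_0,q_0'$ of dimension $m$ over $k$. Third, I would translate the similitude $\sigma_{\fSB A}\simeq\sigma'_{\fSB A}$ through Morita over $\fSB Q$ and deduce that $h\simeq\mu h'$ over $Q$ for some $\mu\in k^\times$. The hard step will be the third: a priori the similarity factor from $\sigma_{\fSB A}\simeq\sigma'_{\fSB A}$ lies in $\fSB Q^\times$, and its reduction to $k^\times$ requires exploiting the odd rank~$m$ (ensured by the hypothesis $\deg A\equiv 2\pmod 4$) in tandem with Dejaiffe injectivity applied to $h\perp(-\mu h')$.
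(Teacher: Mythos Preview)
Your handling of \eqref{item:isob} and \eqref{item:isoc} via Lemma~\ref{lem:clif} and the low-degree Clifford-algebra classification matches the paper. The gaps are in \eqref{item:isod} and \eqref{item:isoe}.

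For \eqref{item:isod}, the assertion that Morita equivalence turns your $\pi,\pi'$ into quadratic Pfister forms over $\fSB Q$ is unjustified. A Morita equivalence $W^-(Q_{\fSB Q},\ba)\to W(\fSB Q)$ is only canonical up to a scalar in $\fSB Q^\times$; you may choose it so that the image of $\pi_{\fSB Q}$ is an honest Pfister form, but then the image of $\pi'_{\fSB Q}$ is determined and is in general only \emph{similar} to a Pfister form, so ``similar Pfister forms are isometric'' does not apply. The paper's route (Proposition~\ref{totdec.prop}) first uses Lemmas~\ref{dec.lem} and~\ref{both.lem} to write $\sigma=\rho\otimes\ad(\pi_0)$ and $\sigma'=\rho\otimes\ad(\varphi')$ with the \emph{same} orthogonal involution $\rho$ on $Q$, so that the Morita images are $\pf{a}(\pi_0)_{\fSB Q}$ and $\pf{a}\varphi'_{\fSB Q}$ with the first genuinely Pfister; after rescaling $\varphi'$ to represent~$1$, the second represents~$1$ and is similar to the first, hence isometric to it, and Proposition~\ref{criterion.prop}\eqref{isobase} with $\nu=1$ finishes.

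For \eqref{item:isoe}, your substeps~1 and~2 are fine and amount to Lemmas~\ref{dec.lem} and~\ref{both.lem}, but you do not execute substep~3. The phrase ``Dejaiffe injectivity applied to $h\perp(-\mu h')$'' is circular: that injectivity lets you pull an isometry back from $\fSB Q$ to $k$ once $\mu\in k^\times$ is in hand, but does not produce~$\mu$. The paper's mechanism (Proposition~\ref{totdec.prop}) is a Clifford-invariant computation: with $d=j^2$, normalize the odd-dimensional $q_0,q_0'$ to have trivial discriminant and compare Clifford invariants of $\pf{d}q_0'$ and $\qf{\lambda}\pf{d}q_0$ over $\fSB Q$; the odd rank forces $(\lambda,d)=0$ in $\br_2(\fSB Q)$, so $\lambda$ is represented by $\pf{d}_{\fSB Q}$, giving $\qf{\lambda}\pf{d}\simeq\pf{d}$, and one may take $\lambda=1\in k^\times$.
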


\begin{proof}
In low degree, one may use cohomological invariants to compare involutions. More precisely, 
   if $\deg A=2$, orthogonal involutions on $A$ are
  classified by 
  their $e_1$ invariant, see \cite[(7.4)]{KMRT} or
  \cite[Th.~3.6]{Tignol:hyderabad}. If $\deg A=4$, they are classified
  by their Clifford algebra, see~\cite[(15.7)]{KMRT}. If $\deg A=6$, orthogonal involutions on $A$ with
  trivial $e_1$ 
  invariant are classified by their $e_2$ invariant (or their
  Clifford algebra),  see~\cite[(15.32)]{KMRT} or
  \cite[Th.~3.10]{Tignol:hyderabad}. With this in hand, cases
  \eqref{item:isob} and \eqref{item:isoc} 
 follow from Lemma~\ref{lem:clif}.
  
 Cases \eqref{item:isod} and \eqref{item:isoe} are proved in Proposition~\ref{totdec.prop} below.
\end{proof}

Nevertheless, Question~\ref{q:iso} has a negative answer in
general. More precisely, we may add the following to the theorem
above:  
\begin{thm}
  \label{thm:isoneg}
  There exist central simple algebras $A$ with orthogonal involutions
  $\sigma$, 
  $\sigma'$ satisfying any of the following conditions:
    \begin{enumerate}
\renewcommand{\theenumi}{\alph{enumi}}
\addtocounter{enumi}{4}
 \item\label{item:isof}
  $\deg A=8$, $\ind A=4$ or $8$, and $\sigma$ and $\sigma'$ are
  totally decomposable, or 
 \item\label{item:isog}
 $\deg A=8$,  $\ind A=2$, and $e_1(\sigma)=e_1(\sigma')=0$, or
 \item\label{item:isoh}
  $\deg A=12$, $\ind A=2$, $e_1(\sigma)=e_1(\sigma')=0$ and $e_2(\sigma)=e_2(\sigma')=0$,
 \end{enumerate}
 and such that 
\[
\sigma_{\fSB A}\simeq\sigma'_{\fSB A},\quad\mbox{and yet
}\quad\sigma\not\simeq\sigma'.
\]  
   
\end{thm}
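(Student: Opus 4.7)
The theorem splits into three cases. Case \eqref{item:isof} is settled by invoking the constructions of \cite[\S 4]{QT:Arasondeg8}, where degree-$8$ algebras of index $4$ or $8$ carrying non-isomorphic totally decomposable involutions that become isomorphic over $\fSB A$ are already exhibited; one has only to reexamine those examples to verify the conditions stated here.

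For cases \eqref{item:isog} and \eqref{item:isoh}, which lie in index $2$, the plan is to work with skew-hermitian forms over a quaternion algebra $Q$. Writing $A=M_n(Q)$ with $n=4$ (resp.\ $n=6$), orthogonal involutions on $A$ correspond, up to isomorphism, to skew-hermitian forms of rank $n$ over $Q$ up to similarity. Since $\fSB A$ and $\fSB Q$ both split $Q$, the condition $\sigma_{\fSB A}\simeq \sigma'_{\fSB A}$ translates into similarity of the corresponding forms $h,h'$ over $\fSB Q$. The objective is thus to produce a pair $h,h'$ that are similar over $\fSB Q$, are not similar over $k$, and whose adjoint involutions have trivial $e_1$, and also trivial $e_2$ in the rank-$6$ case.

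The construction will use the generic sum technique of \cite{QT:outer}. Starting from carefully chosen seed skew-hermitian forms $h_0, h'_0$ of rank $n$ over a smaller field $k_0$ (with trivial $e_1$, and trivial $e_2$ when $n=6$, and non-similar over $k_0$), one passes to a generic extension $k/k_0$ along which a similarity between $h_0$ and $h'_0$ over $\fSB{Q_{k_0}}$ is imposed. By genericity, no similarity descends to $k$: a base-field similarity between the resulting $\sigma$ and $\sigma'$ would, upon specializing the generic parameter, yield a similarity between the seeds over $k_0$, contradicting their choice.

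The crux of the argument is the selection of the seed forms. First, one must exhibit $h_0, h'_0$ of rank $4$ (resp.\ $6$) with the prescribed vanishing of invariants that are non-similar over $k_0$ yet become similar over $\fSB{Q_{k_0}}$. The Dejaiffe--Parimala--Sridharan--Suresh injectivity for Witt groups of skew-hermitian forms does not preclude this: it bears on isometry, while the scalar ambiguity inherent in similarity is precisely what allows the phenomenon. Second, one must track the cohomological invariants $e_1$ and $e_2$ through the generic sum and verify that they remain trivial; this is done by keeping a close watch on discriminants and Clifford algebras at each stage of the construction. Finding seeds with sufficiently controlled similarity groups $G(h_0)$, $G(h'_0)$ and invariants will be the main technical obstacle.
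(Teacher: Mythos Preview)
Your handling of case~\eqref{item:isof} matches the paper's: it simply cites \cite[Ex.~4.2 \& 4.3]{QT:Arasondeg8}.

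For cases~\eqref{item:isog} and~\eqref{item:isoh}, however, your plan has a genuine gap, and in one place it is circular. You propose to start from seed skew-hermitian forms $h_0,h_0'$ of the \emph{full} rank $n$ over some $k_0$, already non-similar over $k_0$, and then ``impose'' a similarity over $\fSB{Q_{k_0}}$ by passing to a generic extension. But if you had rank-$n$ seeds that are non-similar over $k_0$ and similar over $\fSB{Q_{k_0}}$, the theorem would already be proved at $k_0$; and if they are not similar over $\fSB{Q_{k_0}}$, you give no mechanism by which a field extension would make them similar over $\fSB{Q_k}$ without making them similar over $k$. The specialization argument you sketch presupposes exactly the object you are trying to build.

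The paper's route is quite different in its specifics. It first reduces, via Proposition~\ref{criterion.prop}, the isomorphism question to a statement about quadratic forms: $\sigma\simeq\sigma'$ iff $\pf{a}\varphi'_{\fSB Q}\simeq\qf{\nu}\pf{a}\varphi_{\fSB Q}$ for some $\nu\in k^\times$, and $\sigma_{\fSB Q}\simeq\sigma'_{\fSB Q}$ iff the same holds for some $\lambda\in\fSB Q^\times$. The generic sum is then applied to \emph{small} pieces, not to rank-$n$ forms: one finds two $2$-fold Pfister forms $\pf{a,b+1}$ and $\pf{a,b+c^2}$ that share a \emph{common} similarity factor $\lambda=-2aY\in\fSB Q^\times$ (obtained by an explicit computation on the conic $X^2-aY^2+ab=0$), and sets $\varphi=\pf{b+1}\perp\qf{t}\pf{b+c^2}$, $\varphi'=\pf{b+1}\perp\qf{ct}\pf{b+c^2}$ over $k=k_0((a))((t))$. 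Non-isomorphism over $k$ is then established by taking successive residues for the $t$-adic and $a$-adic valuations, which reduces the question to the failure of the Common Value Property for $k_0(\sqrt b)/k_0$ as in \cite{STW}. Example~\ref{2.ex} adds a third summand over $k_1=k((u))$ to reach rank~$6$ with trivial $e_2$. None of these concrete ingredients---the quadratic-form translation, the shared similarity factor on the conic, the residue analysis, or the CVP input---appears in your proposal; what you call ``the main technical obstacle'' is in fact the entire content of the proof.
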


\begin{proof}
Case \eqref{item:isof} was shown in \cite[Ex.~4.2 \& 4.3]{QT:Arasondeg8}. The other two cases are new and are explained in Remark~\ref{rem:subst}
  and Examples~\ref{1.ex} and \ref{2.ex}. 

\end{proof}

\begin{remark}
\label{e3ind2.rem}
(i) By~\eqref{item:isof} and~\eqref{item:isoh},
Theorem~\ref{thm:iso}\eqref{item:isod} does not hold anymore if we
drop one of the two assumptions.  Moreover, the example of
case~\eqref{item:isog} does not satisfy $e_2(\sigma)=e_2(\sigma')=0$;
otherwise, $\sigma$ and $\sigma'$ would be totally decomposable, and
this is impossible again by Theorem~\ref{thm:iso}~\eqref{item:isod}.

(ii) In case~\eqref{item:isoh}, the $e_3$ invariants
of $\sigma$ and $\sigma'$ are defined, and the condition $\sigma_{\fSB
  A}\simeq\sigma'_{\fSB A}$ implies $e_3(\sigma)=e_3(\sigma')$ since
$\ind A=2$. Thus, Example~\ref{2.ex} shows that the $e_3$ invariant does  
not classify orthogonal involutions with trivial $e_1$ and 
$e_2$ invariants on central simple algebras of degree~$12$ and
index~$2$ (although it is classifying if the algebra $A$ is split, and
for isotropic involutions if the algebra has index $2$ by
\cite[Prop.~4.3]{QT:ArasonDocumenta}).

(iii) By Lewis~\cite[Prop.~10]{Lewis}, in all three cases, the involutions $\sigma$ and $\sigma'$ remain non-isomorphic over any odd degree extension of the base field. 
\end{remark}

\section{Generically Pfister involutions}
\label{Pfister.section}

The goal of this section is to construct examples proving
Theorem~\ref{dec:thm}. We start with a few 
observations on central simple algebras with involution over a
complete discretely valued field, introducing a notion of
\emph{unramified} algebra with involution.

Throughout this section we let $K$ denote a field with a discrete
valuation $v\colon K\to \Gamma_K\cup\{\infty\}$. We assume $K$ is complete
for this valuation, $v(K^\times)=\Gamma_K\simeq\mz$, and the residue
field $\overline K$ has characteristic different from~$2$. Let
$(D,\gamma)$ be a 
central division algebra over $K$ with an involution of the first
kind. It is known (see for instance \cite[Th.~1.4, Cor.~1.7]{TWbook})
that $v$ 
extends to a valuation 
\[
v_D\colon D\to \Gamma_D\cup\{\infty\} \qquad\text{where
  $\Gamma_D=v_D(D^\times)=\frac1{\deg D}v(\Nrd D^\times)\subset
  \frac1{\deg D}\Gamma_K$.} 
\]
The involution $\gamma$ induces an involution $\overline\gamma$ on the
residue division algebra $\overline D$. Let $Z(\overline D)$ denote
the center of $\overline D$, which is a field extension of $\overline K$.
Since $\charac\overline K\neq2$ and $\deg D$ is a $2$-power, we have 
\begin{equation}
  \label{eq:inertsplit}
  [D:K]=[\overline D:\overline K]\cdot(\Gamma_D:\Gamma_K)
  \quad\text{and}\quad (\Gamma_D:\Gamma_K)=[Z(\overline D):\overline K],
\end{equation}
see \cite[Prop.~8.64]{TWbook}. In particular, $\Gamma_D=\Gamma_K$ if
and only if 
$\overline D$ is a central division algebra over $\overline K$; when
this condition holds, we have in addition $\deg\overline D=\deg D$. 

As observed by
Scharlau \cite[p.~208]{Sch}, there always exists a uniformizing element
$\pi_D$ for $v_D$ such that $\gamma(\pi_D)=\pi_D$, except in the
following case: 
\[
\mathbf{(*)}\qquad
\parbox{105mm}{$D$ is a quaternion algebra, $\gamma$ is the canonical
  (symplectic) involution on $D$, and $\Gamma_D=\half\Gamma_K$.}
\]
In all cases, Scharlau \cite[p.~204]{Sch} shows that every anisotropic
hermitian form $h$ 
over $(D,\gamma)$ defines an anisotropic hermitian form
$\partial_1(h)$ over $(\overline D,\overline\gamma)$, called the
\emph{first residue form} of $h$. If $(*)$ does not hold, a
\emph{second residue form} $\partial_2(h)$ is defined as the first
residue form of 
$\pi_Dh$ (which is a hermitian form with respect to the involution
$\gamma'\colon d\mapsto \pi_D\gamma(d)\pi_D^{-1}$ on $D$). As an
analogue of Springer's theorem, Scharlau proves \cite[Satz~3.6]{Sch} that
mapping $h$ to $(\partial_1(h),\partial_2(h))$ (resp.\ to
$\partial_1(h)$) yields an isomorphism of Witt groups 
\[
W(D,\gamma)\stackrel\sim\to
\begin{cases}
  W(\overline D,\overline\gamma)\oplus W(\overline
  D,\overline{\gamma'}) & \text{away from case~$(*)$,}\\
  W(\overline D,\overline\gamma) & \text{in case~$(*)$.}
\end{cases}
\]

Thus, every hermitian form $h$ over $(D,\gamma)$ has a first (and, if
$(*)$ does not hold, a second) residue form, which are either $0$ or anisotropic forms with values in $\overline D$. 
We call a hermitian form over $(D,\gamma)$
\emph{unramified} if the following two conditions hold: 
\[ 
\Gamma_D=\Gamma_K\mbox{ (i.e., $D$ is unramified over $K$), and  the
  second residue form of }h\mbox{ is }0.
\] 
Otherwise the form is said to be
\emph{ramified}. Thus, in case~$(*)$ every hermitian form is
ramified. We extend this terminology to algebras with involution as
follows: $(A,\sigma)$ is said to be \emph{unramified} if there exists an unramified hermitian form $h$ such that 
$(A,\sigma)\simeq\Ad(h)$, and otherwise
$(A,\sigma)$ is called \emph{ramified}.

Alternately, one may use the 
theory of gauges developed in \cite{TWbook} to give a characterization of unramified algebras with involution which does not use representations $(A,\sigma)\simeq\Ad(h)$. Recall from \cite[Th.~2.2]{TWpaper} that if
$(A,\sigma)$ is anisotropic there is a unique map
\[
g\colon A\to(\Gamma_K\otimes_\mz\mq)\cup\{\infty\}
\]
with the following properties:
\begin{enumerate}
\item[(i)]
$g(a)=\infty$ if and only if $a=0$;
\item[(ii)]
$g(a+b)\geq\min(g(a),g(b))$ for $a$, $b\in A$;
\item[(iii)]
$g(a\lambda)=g(a)+v(\lambda)$ for $a\in A$ and $\lambda\in F$;
\item[(iv)]
$g(1)=0$ and $g(ab)\geq g(a)+g(b)$ for $a$, $b\in A$;
\item[(v)]
$g(\sigma(a)a)=2g(a)$ for $a\in A$.
\end{enumerate}
Using $g$, one defines a $\overline K$-algebra $A_0$ as follows:
\[
A_0=\frac{\{a\in A\mid g(a)\geq0\}}{\{a\in A\mid g(a)>0\}}.
\]
The algebra $A_0$ is semisimple because $g$ is a $v$-gauge in the sense of
\cite[\S3.2.2]{TWbook}, see \cite[Th.~2.2]{TWpaper}. We call $g$ the
\emph{$\sigma$-special gauge} on $A$. It satisfies
$g\bigl(\sigma(a)\bigr)=g(a)$ for all $a\in A$, and is actually
characterized by this property among all the $v$-gauges on $A$.

\begin{prop}
  \label{prop:caracunram}
  The anisotropic algebra with involution $(A,\sigma)$ is unramified
  if and only if $A_0$ is a central simple $\overline K$-algebra. When
  this condition holds, we have $\deg A_0=\deg A$.
\end{prop}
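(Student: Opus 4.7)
The plan is to represent $(A,\sigma)\simeq\Ad(h)$ for an anisotropic hermitian or skew-hermitian form $h$ on a right $D$-module $M$, with $(D,\gamma)$ a division $K$-algebra with involution (available by~\cite[\S4.A]{KMRT}), and to identify the $\sigma$-special gauge $g$ on $A=\End_D M$ with the gauge induced by an $h$-compatible norm $\alpha$ on $M$. Concretely, after diagonalizing $h\simeq\langle u_1,\ldots,u_r\rangle_\gamma$ with $\gamma(u_i)=u_i$ and (outside case~$(*)$) normalizing so that $v_D(u_i)\in\{0,v(\pi_K)\}$, one sets $\alpha(e_i)=\tfrac12 v_D(u_i)$ on the corresponding orthogonal basis of $M$. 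Property~(v) for the induced gauge follows from the adjunction formula for $\ad(h)$, and the uniqueness of $\sigma$-invariant $v$-gauges stated in~\cite[Th.~2.2]{TWpaper} then forces this gauge to coincide with $g$. Consequently, $A_0$ is identified with the degree-zero part of the associated graded algebra of $\End_D M$, and can be computed block-by-block.

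For the forward direction, suppose $(A,\sigma)$ is unramified, so a representing $h$ can be chosen with $\Gamma_D=\Gamma_K$ and all $v_D(u_i)=0$. Then $\alpha$ is concentrated in degree $0$, and $A_0\simeq\End_{\overline D}(\overline M)\simeq M_r(\overline D)$. Since $D$ is unramified, $\overline D$ is central simple over $\overline K$ with $\deg\overline D=\deg D$, so $A_0$ is central simple over $\overline K$ of degree $r\deg\overline D=\deg A$. For the converse, assume $A_0$ is central simple over $\overline K$. First, $D$ must be unramified: otherwise, either case~$(*)$ holds or $Z(\overline D)$ is a proper extension of $\overline K$ by~\eqref{eq:inertsplit}, and in either scenario $Z(\overline D)$ embeds in the center of $A_0$, contradicting centrality over $\overline K$. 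Second, the normalized block analysis gives $A_0\simeq M_s(\overline D)\times M_{r-s}(\overline D)$, where $s$ counts the indices $i$ with $v_D(u_i)=0$; simplicity of $A_0$ forces $s\in\{0,r\}$, and scaling $h$ by $\pi_K^{-1}$ if necessary (which does not alter $\Ad(h)$) allows one to assume $s=r$. Hence $h$ is unramified, so $(A,\sigma)$ is unramified as well.

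The principal difficulty is the justification of the block-by-block computation of $A_0$: one must show that the off-diagonal blocks (indices $(i,j)$ with $v_D(u_i)\neq v_D(u_j)$) contribute trivially to the degree-zero residue, because the induced gauge values on those blocks lie in a nontrivial coset of $\Gamma_K$ in $\tfrac12\Gamma_K$. A further subtlety concerns case~$(*)$: no $\gamma$-symmetric uniformizer of $D$ exists and the diagonalization trick does not directly apply, but in this case $D$ is automatically ramified and $\overline D$ is a quadratic field extension of $\overline K$, so a direct analysis of the graded structure of $\End_D M$ shows that the center of $A_0$ strictly contains $\overline K$, confirming that $A_0$ fails to be central simple, in agreement with the proposition.
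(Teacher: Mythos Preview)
Your argument is correct and follows essentially the same approach as the paper: both identify the $\sigma$-special gauge with the gauge $\End(\alpha)$ induced by the $h$-norm $\alpha(m)=\tfrac12 v_D(h(m,m))$ on $M$, and then analyze the resulting $A_0$. The only difference is in execution: the paper invokes \cite[Prop.~3.34 and Prop.~2.41]{TWbook} to read off directly that $Z(A_0)$ is a product of copies of $Z(\overline D)$ indexed by the cosets of $\Gamma_D$ in $\Gamma_M=\alpha(M\setminus\{0\})$, whereas you diagonalize $h$ and carry out the block computation $A_0\simeq M_s(\overline D)\times M_{r-s}(\overline D)$ by hand---which has the side benefit of making the degree equality $\deg A_0=\deg A$ explicit.
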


\begin{proof}
  Suppose $(A,\sigma)=(\End_DM,\ad(h))$ for some hermitian space
  $(M,h)$ over a division algebra with involution
  $(D,\gamma)$. Inspection of the proof of \cite[Th.~2.2]{TWpaper} shows
  that the map $g$ is defined as follows: for $a\in\End_DM$, 
  \[
  g(a) = \min\{\alpha(a(m))-\alpha(m)\mid m\in M,\;m\neq0\},
  \]
  where, for all $m\in M$, 
  \[
  \alpha(m)=\half v_D\bigl(h(m,m)\bigr)\in\half\Gamma_D\cup\{\infty\}.
  \]

  Thus, $g=\End(\alpha)$ in the notation of \cite[p.~104]{TWbook}, and
  the set 
  \[
  \Gamma_M=\{\alpha(m)\mid m\in M\setminus\{0\}\}
  \] 
  is a union
  of cosets of $\Gamma_D$. Let $r$ be the number of cosets of
  $\Gamma_D$ in $\Gamma_M$. Since $\Gamma_D\simeq\mz$ and
  $\Gamma_M\subset \half\Gamma_D$, we have $r=1$ or
  $2$. By~\cite[Prop.~3.34 and Prop.~2.41]{TWbook} the algebra $A_0$
  is semisimple, and its center is a product of $r$ copies of
  $Z(\overline D)$. Therefore, $A_0$ is central simple over $\overline
  K$ if and only if $r=1$ and $Z(\overline D)=\overline K$. This last
  condition implies that $D$ is unramified, in particular $(*)$ does
  not hold.

For the rest of the proof, assume $D$ is unramified, and choose a
  uniformizing element $\pi_D$ for $v_D$ in $K$. Since $D$ is
  unramified, it follows that $Z(\overline D)=\overline K$, and we
  need to prove that $(A,\sigma)$ is unramified if and only if $r=1$.
  
  Note that the map $\alpha$ is the norm on $M$ used by Scharlau in
  his definition of the first residue $\partial_1(h)$;
  see~\cite[Prop.~3.1]{Sch}. If $(A,\sigma)$ is unramified we may
  assume $h$ is unramified. Since $\partial_2(h)=0$ it follows that
  $v(h(m,m))\in2\Gamma_D$ for all nonzero $m\in M$, hence $r=1$ and
  $A_0$ is central simple over $\overline K$. Conversely, if $r=1$
  then either $\Gamma_M=\Gamma_D$ or
  $\Gamma_M=\half v_D(\pi_D)+\Gamma_D$. In the first case $h$ is
  unramified, hence $(A,\sigma)$ is unramified. In the second case
  $\pi_Dh$ is unramified. Since $\ad(h)=\ad({\pi_D h})$ we may
  substitute $\pi_D h$ for $h$ and thus again conclude that
  $(A,\sigma)$ is unramified.
\end{proof}

We now establish a sufficient condition for a totally decomposable
algebra to be unramified. This provides a tool for proving that some
ramified algebras with involution are not totally decomposable.
 
\begin{prop}
  \label{prop:unramdec}
  A totally decomposable algebra with involution $(A,\sigma)$ is
  unramified if $v\bigl(G(A,\sigma)\bigr)\subset2\Gamma_K$.
\end{prop}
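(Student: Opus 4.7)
The plan is to verify the criterion in Proposition~\ref{prop:caracunram}: it suffices to show that the residue $\overline K$-algebra $A_0$ attached to the $\sigma$-special gauge $g$ is central simple of degree $\deg A$. We may assume $(A,\sigma)$ is anisotropic, since a hyperbolic form is visibly unramified. Fix a decomposition $(A,\sigma)\simeq\bigotimes_{i=1}^n(Q_i,\sigma_i)$, and let $g_i$ denote the $\sigma_i$-special gauge on $Q_i$.

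The first step is to descend the hypothesis. Since $\sigma(a_1\otimes\cdots\otimes a_n)(a_1\otimes\cdots\otimes a_n)=\prod_i\sigma_i(a_i)a_i$, specializing $a_j=1$ for $j\neq i$ shows that $G(Q_i,\sigma_i)\subset G(A,\sigma)$, hence $v\bigl(G(Q_i,\sigma_i)\bigr)\subset 2\Gamma_K$ for each~$i$.

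The second and main step is to prove that each $(Q_i,\sigma_i)$ is unramified under the condition $v(G(Q_i,\sigma_i))\subset 2\Gamma_K$. When $\sigma_i$ is the canonical symplectic involution we have $G(Q_i,\sigma_i)=\Nrd(Q_i^\times)$, and the identity $v\circ\Nrd=2v_{Q_i}$ immediately forces $\Gamma_{Q_i}=\Gamma_K$, so $Q_i$ is unramified and $(Q_i,\sigma_i)\simeq\Ad(\qf 1)$ is visibly unramified. When $\sigma_i$ is orthogonal, write $Q_i=(a,b)_K$ with a standard basis $1,u,w,uw$ ($u^2=a$, $w^2=b$, $uw=-wu$), and $\sigma_i=\Int(u)\circ\gamma_i$ where $\gamma_i$ is the canonical involution; then a direct computation of $\sigma_i(q)q$ at $q=u,w,uw$ yields respectively $-a$, $b$ and $-ab$, all lying in $G(Q_i,\sigma_i)$. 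Hence $v(a),v(b)\in 2\Gamma_K$, which implies that $Q_i$ is unramified. Using $(Q_i,\sigma_i)\simeq\Ad(\qf u)$ as a rank-$1$ skew-hermitian form (or the analogous rank-$2$ quadratic form when $Q_i$ is split) and the freedom $\Ad(\qf u)=\Ad(\qf{\lambda u})$ for $\lambda\in K^\times$, we rescale $u$ so that $v_{Q_i}(u)=0$ and obtain an unramified realization. This step is the main technical obstacle: a single hypothesis on the similitude group must be made to control both the ramification of $Q_i$ and the valuation of a discriminant-type element, and the orthogonal sub-case on a division quaternion is where some explicit computation inside $Q_i$ becomes necessary.

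Finally we assemble. With each $(Q_i,\sigma_i)$ unramified, the tensor product $g=g_1\otimes\cdots\otimes g_n$ is the $\sigma$-special gauge on $A$, and its associated graded algebra satisfies $\operatorname{gr}_g(A)\simeq\bigotimes_{\operatorname{gr}(K)}\operatorname{gr}_{g_i}(Q_i)\simeq\bigl(\bigotimes_{\overline K}\overline{Q_i}\bigr)\otimes_{\overline K}\operatorname{gr}(K)$ since each $Q_i$ is unramified. Its degree-$0$ component is $A_0\simeq\bigotimes_{\overline K}\overline{Q_i}$, a tensor product of $\overline K$-central simple algebras of degree~$2$, which is itself central simple of degree $2^n=\deg A$. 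Proposition~\ref{prop:caracunram} then yields the result.
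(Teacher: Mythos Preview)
Your overall strategy matches the paper's: pass to each quaternion factor via $G(Q_i,\sigma_i)\subset G(A,\sigma)$, show each $(Q_i,\sigma_i)$ is unramified, then tensor the $\sigma_i$-special gauges and invoke Proposition~\ref{prop:caracunram}. Your treatment of the orthogonal quaternion case via an explicit symbol presentation $Q_i=(a,b)_K$ with $\sigma_i=\Int(u)\circ\gamma_i$ and the observation that $-a,b,-ab\in G(Q_i,\sigma_i)$ is a valid and pleasantly explicit alternative to the paper's argument, which instead takes a $\sigma_i$-symmetric uniformizer $\pi$ (available by Scharlau) and shows that $(\pi-\overline\pi)^2\in G(Q_i,\sigma_i)$ would have value outside $2\Gamma_K$ if $\Gamma_{Q_i}\neq\Gamma_K$.

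One step needs repair. Your reduction ``we may assume $(A,\sigma)$ is anisotropic, since a hyperbolic form is visibly unramified'' is not correct: a hyperbolic hermitian form over $(D,\gamma)$ has both residues zero, but the definition of \emph{unramified} in the paper also requires $\Gamma_D=\Gamma_K$, which hyperbolicity alone does not give (if the division algebra underlying $A$ is ramified, $(A,\sigma)$ is ramified no matter what). The right reduction, as in the paper, is that hyperbolicity would force $G(A,\sigma)=K^\times$, hence $v\bigl(G(A,\sigma)\bigr)=\Gamma_K\not\subset 2\Gamma_K$, contradicting the hypothesis; therefore $(A,\sigma)$ is not hyperbolic, and since a totally decomposable algebra with involution is anisotropic as soon as it is not hyperbolic (see \cite[Cor.~3.2]{BQ}), $(A,\sigma)$ and hence each factor $(Q_i,\sigma_i)$ are anisotropic, so the $\sigma_i$-special gauges you use do exist.
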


\begin{proof}
  We first consider the case where $A$ is a quaternion algebra. If $A$
  is a split quaternion algebra, then $A\simeq\Ad(q)$ for some binary
  quadratic form $q$ over $K$. The determinant of $q$ is a similarity
  factor of $q$, hence the condition
  $v\bigl(G(A,\sigma)\bigr)\subset2\Gamma_K$ implies that $\Ad(q)$ is
  unramified. Now, assume $A$ is a quaternion division algebra. The
$\sigma$-special gauge $g$ on $A$ does not depend on $\sigma$ and
coincides with the valuation $v_A$ 
  extending $v$, hence $A_0$ is the residue division algebra $\overline
  A$. If $\sigma$ is the canonical involution, then
  $G(A,\sigma)=\Nrd(A^\times)$, hence the condition
  $v\bigl(G(A,\sigma)\bigr)\subset2\Gamma_K$ implies $\Gamma_A=\Gamma_K$. It
  then follows by~\eqref{eq:inertsplit} that $A_0$ is a central simple
  $\overline K$-algebra, hence $(A,\sigma)$ is unramified. If $\sigma$
  is an orthogonal involution, then as observed above $A$ contains a
  uniformizing element $\pi_A$ such that $\sigma(\pi_A)=\pi_A$
  by~\cite[p.~208]{Sch}. Let $\overline\pi_A=\Trd(\pi_A)-\pi_A\in A$ be the
  conjugate quaternion. Suppose $\Gamma_A\neq\Gamma_K$. Then
  $v(\pi_A+\overline\pi_A)\in\Gamma_K$ whereas
  $v(\pi_A)=v(\overline\pi_A)\notin\Gamma_K$, hence
  $v(\pi_A+\overline\pi_A)>v(\pi_A)$, and therefore
  \[
  v(\pi_A-\overline\pi_A)=
  v(2\pi_A-(\pi_A+\overline\pi_A))=v(\pi_A)\notin\Gamma_K. 
  \]
  Now,
  \[
  \sigma(\pi_A-\overline\pi_A)\cdot(\pi_A-\overline\pi_A) =
  (\pi_A-\overline\pi_A)^2\in K^\times,
  \]
  hence $(\pi_A-\overline\pi_A)^2\in G(A,\sigma)$. But
  $v((\pi_A-\overline\pi_A)^2)=2v(\pi_A)\notin2\Gamma_K$, in contradiction
  with the hypothesis that
  $v\bigl(G(A,\sigma)\bigr)\subset2\Gamma_K$. Therefore
  $\Gamma_A=\Gamma_K$ and 
  it follows as in the previous case that $(A,\sigma)$ is
  unramified. We have thus proved the proposition in the case where
  $A$ is a single quaternion algebra.

  Now, let
  $(A,\sigma)=(Q_1,\sigma_1)\otimes_K\cdots\otimes_K(Q_n,\sigma_n)$,
  where each $Q_i$ is a quaternion $K$-algebra, and assume
  $v\bigl(G(A,\sigma)\bigr)\subset2\Gamma_K$.
   Then $(A,\sigma)$ is not
  hyperbolic because otherwise $G(A,\sigma)=K^\times$. It follows by
  \cite[Cor.~3.2]{BQ} that $(A,\sigma)$ is anisotropic, hence each
  $(Q_i,\sigma_i)$ 
  is anisotropic. Each $Q_i$ then carries a $\sigma_i$-special gauge
  $g_i$, and since $G(Q_i,\sigma_i)\subset G(A,\sigma)$ the first part
  of the proof shows that each $(Q_i,\sigma_i)$ is unramified. The
  tensor product $g=g_1\otimes\cdots\otimes g_n$ is a $v$-gauge on $A$
  by \cite[Prop.~3.41]{TWbook}, and it satisfies $g\circ\sigma=g$ by
  \cite[Prop.~1.3]{TWpaper} because $g_i\circ\sigma_i=g_i$ for all $i$
  by the uniqueness property of the $\sigma_i$-special gauge. The map
  $g$ is therefore the $\sigma$-special gauge 
  on $A$ by \cite[Th.~2.2]{TWpaper}. By \cite[Prop.~3.41]{TWbook} or
  \cite[Prop.~1.3]{TWpaper} we have 
  \[
  A_0=(Q_1)_0\otimes_{\overline K}\cdots\otimes_{\overline K}(Q_n)_0.
  \]
  Since each $(Q_i,\sigma_i)$ is unramified, it follows that each
  $(Q_i)_0$ is a central quaternion $\overline K$-algebra, hence $A_0$
  is a central simple $\overline K$-algebra. Therefore,
  Proposition~\ref{prop:caracunram} shows that $(A,\sigma)$ is unramified.
\end{proof}

Let us now use Proposition~\ref{prop:unramdec} to construct examples of
algebras with involution that are not totally decomposable, and that
remain non-totally decomposable after odd-degree scalar
extensions. Let $(D,\gamma)$ be a central division algebra with an
involution of the first kind over an arbitrary field $k$ (of
characteristic different from~$2$), and let $h_1$, $h_2$ be
nonsingular hermitian forms over $(D,\gamma)$. Consider the field
$F=k((t))$ of Laurent series in one indeterminate over $k$. Extending
scalars to $F$, we obtain the central division algebra with involution
$(\hat D,\hat\gamma)=(D,\gamma)_F$ over $F$ and the hermitian forms
$\hat h_1=(h_1)_F$ and $\hat h_2=(h_2)_F$ over $(\hat
D,\hat\gamma)$. Consider the following hermitian form over $(\hat
D,\hat\gamma)$ (cf.~\cite[\S 3.2]{QT:outer}):
\[
h=\hat h_1\perp \qf{t}\hat h_2.
\]

\begin{prop}
  \label{prop:gencons}
  Let $K$ be an odd-degree field extension of $F$, and let $v\colon
  K\to \Gamma_K\cup\{\infty\}$ denote the discrete valuation on $K$
  extending the $t$-adic valuation on $F$. If $h_1$ and $h_2$ are not
  hyperbolic, then the algebra with involution $\Ad(h)_K$ is
  ramified. If $h_1$ and $h_2$ are not similar, then
  $v\bigl(G(h_K)\bigr)\subset2\Gamma_K$. 
  If $h_1$ and $h_2$ are not hyperbolic and are not similar, then $\Ad(h)_K$ is not totally
  decomposable. 
\end{prop}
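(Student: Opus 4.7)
The plan is to analyse $h_K$ via Scharlau's first and second residue forms and then to descend information from $\overline K$ back to $k$ through the odd-degree extension $\overline K/k$. The decomposition $h=\hat h_1\perp\qform{t}\hat h_2$ is already essentially of the shape of Scharlau's canonical decomposition: after diagonalising each $h_i$ with $\gamma$-symmetric units in $D$, all entries coming from $\hat h_1$ stay units in $\hat D_K$ (which is unramified over $K$ because $D$ comes from $k$), while the entries $tb_j$ have odd $v_K$-valuation $e=e(K/F)$; oddness of $e$ and of $[\overline K:k]$ is forced by $[K:F]$ being odd. Writing $t=\pi_K^e u_t$ with $u_t\in K^\times$ a unit, Scharlau's construction (Satz~3.6 of~\cite{Sch}) then yields
\[
\partial_1(h_K)=(h_1)_{\overline K}^{\mathrm{an}},\qquad
\partial_2(h_K)=\qform{\overline{u_t}}\,(h_2)_{\overline K}^{\mathrm{an}},
\]
both viewed as hermitian forms over the residue division algebra (applying Morita if $D_{\overline K}$ is not itself division).

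For the first assertion, hyperbolicity of hermitian forms descends through odd-degree extensions, so under the non-hyperbolicity hypothesis both residues are nonzero. Any representation $\Ad(h)_K\simeq\Ad(\tilde h)$ has the form $\tilde h\simeq\qform{\lambda}h_K$ for some $\lambda\in K^\times$; scaling by a unit preserves the two residues up to unit factors, while scaling by a uniformizer swaps them. In either case $\partial_2(\tilde h)\neq 0$, so $\tilde h$ is ramified and thus $\Ad(h)_K$ is ramified.

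For the second assertion, suppose $\lambda\in G(h_K)$ has odd valuation; then $\qform{\lambda}h_K\simeq h_K$. Passing to anisotropic parts and using that an anisotropic hermitian form over a complete discretely valued field (with unramified underlying division algebra) is determined by its two residues, the isometry together with the swap of residues induced by $v(\lambda)$ odd yields
\[
(h_1)_{\overline K}^{\mathrm{an}}\simeq\qform{c}\,(h_2)_{\overline K}^{\mathrm{an}}
\]
over $\overline K$ for some $c$. The Springer-type descent for similarity of anisotropic hermitian forms through the odd-degree extension $\overline K/k$ then produces a similarity $h_1^{\mathrm{an}}\simeq\qform{c'}h_2^{\mathrm{an}}$ over $k$; in the cases of interest, where $\dim h_1=\dim h_2$, this promotes to $h_1\simeq\qform{c'}h_2$, contradicting the non-similarity hypothesis. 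Hence $v(G(h_K))\subseteq 2\Gamma_K$.

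The third assertion is then immediate: under both hypotheses, combining the ramification of $\Ad(h)_K$ with the inclusion $v(G(h_K))\subseteq 2\Gamma_K$ and the contrapositive of Proposition~\ref{prop:unramdec} rules out total decomposability. The main obstacle is the second assertion, where converting an odd-valuation similarity factor of $h_K$ into a genuine similarity of $h_1$ and $h_2$ over $k$ rests on the Springer-type descent for similarity of anisotropic hermitian forms together with careful bookkeeping of dimensions, Witt indices, and any Morita reduction needed when $D_{\overline K}$ fails to be a division algebra.
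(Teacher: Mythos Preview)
Your argument follows essentially the same route as the paper's: compute the two residues of $h_K$ (using that $e(K/F)$ is odd so $t$ is a uniformizer times a unit modulo squares), use odd-degree descent for hyperbolicity to get the first claim, observe that an odd-valuation similarity factor swaps the two residues and hence forces $(h_1)_{\overline K}$ and $(h_2)_{\overline K}$ to be similar in the Witt group, then descend this similarity through the odd-degree extension $\overline K/k$, and finally invoke Proposition~\ref{prop:unramdec}.

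Two minor remarks. First, your hedge ``in the cases of interest, where $\dim h_1=\dim h_2$'' is not needed and leaves the second assertion formally weaker than stated: the paper avoids the passage through anisotropic parts by citing Lewis~\cite[Prop.~10]{Lewis}, which gives odd-degree descent directly for similarity of hermitian forms (equivalently, for isomorphism of adjoint involutions), so no separate bookkeeping of dimensions or Witt indices is required. Second, the Morita reduction you flag when $D_{\overline K}$ is not division is unnecessary: Scharlau's residue map and the Witt group $W(D_{\overline K},\gamma_{\overline K})$ make sense for any central simple algebra with involution, and the transfer argument operates at that level.
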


\begin{proof}
  Let $e=(v(K^\times):v(F^\times))$ be the ramification index of
  $K/F$, and let $\pi\in K$ be a uniformizing element of $K$, hence
  also of $\hat D_K=D\otimes_kK$. We have $\pi^e=ut$ for some $u\in
  K^\times$ with $v(u)=0$, and $t\equiv u\pi\bmod K^{\times2}$ since
  $e$ is odd. Therefore,
  \[
  h_K\simeq(\hat h_1)_K\perp\qf{u\pi}(\hat h_2)_K,
  \]
  hence
  \[
  \partial_1(h_K)=(h_1)_{\overline K}
  \quad\text{and}\quad \partial_2(h_K) = \qf{\overline
    u}(h_2)_{\overline K} \quad\text{in the Witt group
    $W(D_{\overline K},\gamma_{\overline K})$.}
  \]
  If $h_1$ and $h_2$ are not hyperbolic, then $(h_1)_{\overline K}$
  and $(h_2)_{\overline K}$ are not hyperbolic because $[\overline
  K:k]$ is odd, hence no scalar multiple of $h_K$ is
  unramified. Therefore, $\Ad(h)_K$ is ramified.

  Now, let $\mu\in G(h_K)$. If $v(\mu)\notin2\Gamma_K$, then there
  exists $\mu_0\in K^\times$ such that $v(\mu_0)=0$ and
  $\mu\equiv\mu_0\pi\bmod K^{\times2}$. Then from $\qf\mu h_K\simeq
  h_K$ it follows that
  \[
  \qf{\mu_0u}(\hat h_2)_K\perp \qf{\mu_0\pi}(\hat h_1)_K \simeq (\hat
  h_1)_K \perp \qf{u\pi}(\hat h_2)_K.
  \]
  Comparing the residues of each side, we obtain
  \[
  \qf{\mu_0u}(h_2)_{\overline K}= (h_1)_{\overline K}\qquad\text{in
    $W(D_{\overline K}, \gamma_{\overline K})$.}
  \]
  Since $[\overline K:k]$ is odd, a transfer argument shows that $h_1$
  and $h_2$ are similar; see~\cite[Prop. 10]{Lewis}.

  Hence if the hermitian forms $h_1$ and $h_2$ are not hyperbolic and not similar, $\Ad(h)_K$ is ramified and
  $v\bigl(G(\Ad(h)_K)\bigr)\subset2\Gamma_K$. It follows by
  Proposition~\ref{prop:unramdec} that $\Ad(h)_K$ is not totally
  decomposable. 
\end{proof}

To obtain particularly significant instances of this construction,
recall from~\cite[Ex.~4.2 \& 4.3]{QT:Arasondeg8} that there exist 
central simple $k$-algebras $A$ of degree~$8$ with orthogonal involutions
$\sigma_1$, $\sigma_2$ such that
\begin{enumerate}
\item[(i)]
$\sigma_1$ and $\sigma_2$ are not hyperbolic and are not isomorphic,
and
\item[(ii)]
over the function field $\fSB A$ of the Severi--Brauer variety of $A$,
there is a $3$-fold Pfister form $\varphi$ such that
\[
(\sigma_1)_{\fSB A}\simeq(\sigma_2)_{\fSB A}\simeq\ad(\varphi).
\]
\end{enumerate}
In these examples, the field $k$ has characteristic zero, the index
of $A$ is $4$ or $8$, and $A$ is a tensor product of three quaternion
algebras. Now, choose a division algebra $D$ 
Brauer-equivalent to $A$ and an orthogonal involution $\gamma$ on
$D$. We may then find hermitian forms $h_1$, $h_2$ over $(D,\gamma)$
such that $\sigma_1\simeq\ad(h_1)$ and $\sigma_2\simeq\ad(h_2)$. With
this choice of $h_1$ and $h_2$, the construction preceding
Proposition~\ref{prop:gencons} yields a hermitian form $h=\hat
h_1\perp \qf t \hat h_2$ over the division algebra with involution
$(\hat D,\hat\gamma)=(D,\gamma)_F$, where $F=k((t))$.

\begin{thm}
  \label{thm:exgendec}
  With the notation above, $\Ad(h)$ is a central simple $F$-algebra of
  degree~$16$ with orthogonal involution that is not totally
  decomposable over $F$ nor over any odd-degree extension of $F$. Yet,
  over the function field $\fSB{\hat D}$ of the Severi--Brauer variety
  of $\hat D$, the algebra with involution $\Ad(h)_{\fSB{\hat D}}$ is
  totally decomposable.
\end{thm}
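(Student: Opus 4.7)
The plan is to verify in turn the three assertions: $\Ad(h)$ is a degree-$16$ algebra with orthogonal involution, not totally decomposable over any odd-degree extension of $F$, but totally decomposable after scalar extension to $\fSB{\hat D}$. The first assertion follows from a direct degree count: since $(A,\sigma_i)\simeq\Ad(h_i)$ has degree $8$, each hermitian form $h_i$ has $D$-rank $8/\deg D$, hence $h=\hat h_1\perp\qf t\hat h_2$ has $\hat D$-rank $16/\deg D$, so $\deg\Ad(h)=16$, and $\ad(h)$ is orthogonal because the $\ad(h_i)=\sigma_i$ are. Non-total-decomposability over every odd-degree extension of $F$ is a direct application of Proposition~\ref{prop:gencons}, whose hypotheses hold by construction: $h_1$ and $h_2$ are not hyperbolic (since $\sigma_1$ and $\sigma_2$ are not), and they are not similar (since similar hermitian forms have isomorphic adjoint involutions, while $\sigma_1\not\simeq\sigma_2$).

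For the third assertion, the strategy is to show via Morita equivalence that $\Ad(h)_{\fSB{\hat D}}$ is the adjoint of a $4$-fold Pfister form, and then invoke Becher's theorem. Set $L:=\fSB{\hat D}$. Over $L$ the algebra $\hat D$ splits, and Morita correspondence turns each $\hat h_i$ into a quadratic form $q_i$ over $L$; since the correspondence respects orthogonal decomposition and scalar multiplication, the form $h$ corresponds to $q_1\perp\qf{t}q_2$. The key point is that both $q_1$ and $q_2$ are similar to a common $3$-fold Pfister form $\tilde\varphi$ over $L$, enhancing the hypothesis $(\sigma_i)_{\fSB A}\simeq\ad(\varphi)$. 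Granting this, write $q_i\simeq\qf{\lambda_i}\tilde\varphi$ with $\lambda_i\in L^\times$; then
\[
q_1\perp\qf{t}q_2\simeq\qf{\lambda_1}\tilde\varphi\perp\qf{\lambda_2 t}\tilde\varphi\simeq\qf{\lambda_1}\pf{-\lambda_1^{-1}\lambda_2 t}\otimes\tilde\varphi,
\]
a scalar multiple of a $4$-fold Pfister form. Since the adjoint involution is unchanged by rescaling, $\Ad(h)_L\simeq\Ad\bigl(\pf{-\lambda_1^{-1}\lambda_2 t}\otimes\tilde\varphi\bigr)$, and Becher's theorem yields total decomposability.

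The main obstacle is the key transfer step: moving the Pfister-similarity hypothesis from $\fSB A/k$ to $L=\fSB{\hat D}/F$. The route I would follow passes through the generic splitting field $\fSB{\hat A}$ of $\hat A=A\otimes_kF$, which naturally contains both $\fSB A$ and $F$: base-changing $(A,\sigma_i)_{\fSB A}\simeq\Ad(\varphi)$ gives that each Morita image of $\hat h_i$ over $\fSB{\hat A}$ is similar to $\varphi_{\fSB{\hat A}}$. Since $\hat A$ and $\hat D$ are Brauer-equivalent over $F$, the algebra $\hat A\otimes_F L$ splits, so the compositum $\fSB{\hat A}\cdot L$ is the function field of a split projective space over $L$, hence purely transcendental over $L$. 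Finally, the property of being similar to an anisotropic Pfister form descends through purely transcendental extensions (an $8$-dimensional form lying in $I^3$ is a scalar multiple of a $3$-fold Pfister form, by the Arason--Pfister Hauptsatz), which passes the property back from the compositum to $L$ and produces the common $\tilde\varphi$ needed above.
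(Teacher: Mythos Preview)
Your argument is correct. The first two assertions are handled exactly as in the paper: a direct degree count, and an application of Proposition~\ref{prop:gencons} using that non-hyperbolic non-isomorphic $\sigma_1$, $\sigma_2$ force $h_1$, $h_2$ to be non-hyperbolic and non-similar.

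For the third assertion, your route and the paper's differ in how the Pfister hypothesis is transported. You work over $L=\fSB{\hat D}$, push the hypothesis up from $\fSB A$ to $\fSB{\hat A}$ and then to the compositum $\fSB{\hat A}\cdot L$, and descend to $L$ using that this compositum is purely transcendental over $L$. The paper instead observes at the outset that $\fSB A$ is purely transcendental over $\fSB D$ (since $A$ and $D$ are Brauer-equivalent), so the $3$-fold Pfister form $\varphi$ may already be chosen over $\fSB D$; one then fixes a single Morita equivalence over $\fSB D$, obtains $(h_i)_{\fSB D}=\qf{\alpha_i}\varphi$ with $\alpha_i\in\fSB D^\times$, and simply base-changes to $\fSB{\hat D}$ to get $h_{\fSB{\hat D}}=\qf{\alpha_1,\alpha_2 t}\varphi_{\fSB{\hat D}}$. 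Both arguments hinge on the same fact---descent of the Pfister property through a purely transcendental extension---but the paper's version avoids the compositum by doing the descent once over $k$ rather than over $F$, and it makes the ``common $\tilde\varphi$'' step automatic. Your invocation of Becher's theorem at the end is harmless but unnecessary: once the underlying algebra is split, the adjoint of a Pfister form is totally decomposable by the obvious factorization.
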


\begin{proof}
  Since $(A,\sigma_1)_F\simeq\Ad(\hat h_1)$ and
  $(A,\sigma_2)_F\simeq\Ad(\hat h_2)$ are central simple algebras of
  degree~$8$ with orthogonal involutions, it is clear that $\Ad(h)$ is
  a central simple algebra of degree~$16$ with orthogonal
  involution. Condition~(i) on $\sigma_1$ and $\sigma_2$ implies that
  $h_1$ and $h_2$ are not hyperbolic and are not similar. Therefore,
  it follows from Proposition~\ref{prop:gencons}
 that $\Ad(h)$ is not
  totally decomposable over $F$ nor over any odd-degree extension of
  $F$. To prove the last statement, note that $\fSB A$ is a purely
  transcendental extension of $\fSB D$, hence for the Pfister form
  $\varphi$ in condition~(ii) we may choose a form defined over
  $\fSB D$. Choosing a minimal left ideal in the split algebra
  $D_{\fSB D}$,
  we set up a Morita equivalence between hermitian forms over
  $(D_{\fSB D},\gamma_{\fSB D})$ and quadratic forms over $\fSB D$, hence also
  between hermitian forms over $(\hat D_{\fSB{\hat D}},
  \hat\gamma_{\fSB{\hat D}})$ and quadratic forms over $\fSB{\hat
    D}$, which yield identifications
  $W(D_{\fSB D},\gamma_{\fSB D})=W(\fSB D)$ and $W(\hat D_{\fSB{\hat
      D}},\hat\gamma_{\fSB{\hat D}})=W(\fSB{\hat D})$. Condition~(ii)
  shows that there are $\alpha_1$, $\alpha_2\in 
  \fSB{D}^\times$ such that
  \[
  (h_1)_{\fSB D}=\qf{\alpha_1}\varphi \quad\text{and}\quad
  (h_2)_{\fSB D}=\qf{\alpha_2}\varphi \quad\text{in $W(\fSB D)$,}
  \]
  hence
  \[
  h_{\fSB{\hat D}}=\qf{\alpha_1,\alpha_2t}\varphi_{\fSB{\hat D}}
  \quad\text{in $W(\fSB{\hat D})$.}
  \]
  Since the right side is a multiple of a Pfister form, it follows
  that $\Ad(h)_{\fSB{\hat D}}$ is totally decomposable.
\end{proof}

\begin{remarks}
\label{Arason1.rem}

\noindent(1) 
The algebra $\Ad(h)$ in Theorem~\ref{thm:exgendec} has index~$4$ or
$8$. By contrast, Becher's theorem~\cite[Th.~2]{Becher} shows that if
a central simple algebra with orthogonal involution of index at
most~$2$ is totally decomposable after generic splitting, then it is
totally decomposable (see
Proposition~\ref{dec:prop}). Therefore, the 
construction used in 
Theorem~\ref{thm:exgendec} is impossible if the index of $A$ is $2$,
which means that there are no orthogonal involutions
$\sigma_1$, $\sigma_2$ satisfying conditions~(i) and (ii) if the index
of $A$ is~$2$. In \S\,\ref{index2.section}, we provide a direct proof
of this fact (see Proposition~\ref{totdec.prop}).  
\smallbreak

\noindent(2)
Let $L$ be an arbitrary field extension of the center $F$ of the
algebra $\Ad(h)$ of Theorem~\ref{thm:exgendec}. If $\Ad(h)_L$ is
isotropic, then it is also isotropic after generic splitting of $\hat
D_L$. But since $\Ad(h)$ is totally decomposable over $\fSB{\hat D}$ it
is also totally decomposable over $\fSB{\hat D_L}$, hence
$\Ad(h)_{\fSB{\hat D_L}}$ is hyperbolic, and it follows
from~\cite[Theorem~1.1]{Karphyporth} that $\Ad(h)_L$ is
hyperbolic. Thus, $\Ad(h)$ satisfies the `Isotropy $\Rightarrow$
Hyperbolicity' condition introduced in~\cite{BPQ}, and
Theorem~\ref{thm:exgendec} shows that this condition is necessary but
not sufficient for an algebra with involution to be totally
decomposable.  
\end{remarks}

\section{Generic splitting in index $2$}
\label{index2.section} 

In this section, we address
Question~\ref{q:iso}, considering only algebras with orthogonal
involution that are split and hyperbolic over a quadratic extension of
$k$ (and are therefore of index at most~$2$). Since the answer is known in the split case, 
we assume from now on that $Q$ is a central quaternion division algebra over a field $k$ (of characteristic different from~$2$).

\begin{lem}
\label{dec.lem}
Let $\sigma$ be an orthogonal involution on $A=M_r(Q)$. If there
exists a quadratic field extension $Z/k$ over which $(A,\sigma)$ is
split and hyperbolic, then $Q$ admits an orthogonal involution $\rho$
of discriminant $Z$, and $(A,\sigma)$ decomposes as  
\[
(A,\sigma)\simeq (Q,\rho)\otimes\Ad(\varphi)
\] 
for some $r$-dimensional quadratic form $\varphi$ over $k$. 
\end{lem}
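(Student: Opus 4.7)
The plan is to realize $\sigma$ as the adjoint of a hermitian form over $(Q,\rho)$ for a suitably chosen $\rho$, and then to show that the hyperbolicity hypothesis forces this form to be $k$-diagonalizable.

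First, to construct $\rho$: since $(A,\sigma)_Z$ is split and $A$ is Brauer-equivalent to $Q$, the algebra $Q$ is split by $Z$, so $Z$ embeds in $Q$ as a maximal subfield. Fixing such an embedding, I identify $\sqrt{d}\in Q$ with a generator of $Z = k(\sqrt{d})$ and set $\rho = \Int(\sqrt{d}) \circ \gamma$, where $\gamma$ is the canonical symplectic involution of $Q$. A direct computation verifies that $\rho$ is orthogonal of discriminant $d$, so its discriminant extension is $Z$, and moreover $\rho|_Z$ is the nontrivial Galois automorphism $\iota$ of $Z/k$.

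Since $\rho$ is orthogonal, the standard correspondence (\cite[\S4.A]{KMRT}) gives $\sigma = \ad(h)$ for some nondegenerate rank-$r$ hermitian form $h\colon Q^r \times Q^r \to Q$ with respect to $(Q,\rho)$. If $h$ admits an orthogonal diagonalization $\qf{a_1,\dots,a_r}_\rho$ with $a_i \in k^\times$, then with $\varphi = \qf{a_1,\dots,a_r}$ we obtain $(A,\sigma) \simeq \Ad(h) \simeq (Q,\rho) \otimes \Ad(\varphi)$, as desired. To establish the diagonalization I would proceed by induction on $r$, reducing to the existence of some $v \in Q^r$ with $h(v,v) \in k^\times$: splitting off the rank-$1$ subform $\qf{h(v,v)}_\rho$ leaves a rank-$(r-1)$ orthogonal complement whose adjoint involution remains split hyperbolic over $Z$, and the induction proceeds.

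To find such a vector $v$: since $\rho|_Z = \iota$ with $\mathrm{Sym}(\iota) = k$, the restriction of $h$ to $Z^r \subset Q^r$ composed with the $Z$-linear projection $Q = Z \oplus Zj \to Z$ (for $j$ a pure quaternion anticommuting with $Z$) gives a $(Z,\iota)$-hermitian form on $Z^r$, which diagonalizes with entries in $k^\times$ by standard theory of hermitian forms over quadratic extensions. The hyperbolicity hypothesis is then invoked to upgrade this to a diagonalization of $h$ itself: it forces the non-scalar ($kj$- and $k\sqrt{d}\,j$-) components of $h(v,v) \in \mathrm{Sym}(\rho) = k \oplus kj \oplus k\sqrt{d}\,j$ to vanish for appropriate $v$. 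This final upgrade---promoting control of the $k$-component alone (via the $(Z,\iota)$-form) to control of $h(v,v)$ entirely---is the principal obstacle of the argument, and the hyperbolicity of $\sigma_Z$ is essential here. Without it, counterexamples abound: for instance, rank-$1$ forms $\qf{s}_\rho$ with $s \in \mathrm{Sym}(\rho) \setminus k$ admit no scalar diagonalization.
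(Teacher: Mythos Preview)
Your framework is correct up to a point: constructing $\rho$ of discriminant $Z$, writing $\sigma=\ad(h)$ for a $\rho$-hermitian form $h$, and reducing by induction to the existence of some $v$ with $h(v,v)\in k^\times$. (The induction step is fine, since $(Q,\rho)_Z$ is itself hyperbolic, so splitting off $\qf{h(v,v)}_\rho$ preserves hyperbolicity of the complement over $Z$.) But there is a genuine gap at exactly the place you label the ``principal obstacle'': you never say \emph{how} the hyperbolicity of $\sigma_Z$ produces a vector $v$ with $h(v,v)\in k^\times$. Diagonalizing the auxiliary $(Z,\iota)$-hermitian form $h_0$ controls only the $Z$-component of $h(v,v)$ for $v\in Z^r$; nothing in your outline forces the $Zj$-component to vanish, and you offer no mechanism connecting hyperbolicity over $Z$ to this vanishing. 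As your own rank-$1$ remark shows, the hypothesis is doing real work here, so this step cannot be waved through.

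The paper takes a different and shorter route that sidesteps this difficulty by citation. It first invokes \cite[Th.~3.3]{BST}: hyperbolicity of $\sigma_Z$ yields a $\sigma$-skew element $g\in A$ with $g^2=a$ (where $Z=k(\sqrt a)$). The centralizer of $g$ in $A$ is Brauer-equivalent to $A_Z$, hence split. A second citation, \cite[Appendix]{QT:daca}, then converts this data---a skew square root of $a$ in $A$ with split centralizer---directly into a pure quaternion $i\in Q$ with $i^2=a$ and scalars $\alpha_1,\dots,\alpha_r\in k^\times$ such that $\sigma$ is adjoint to the skew-hermitian form $\qf{i\alpha_1,\dots,i\alpha_r}$ over $(Q,\ba)$. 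Thus the hyperbolicity hypothesis is consumed entirely in manufacturing $g$, and the diagonalization is deduced from the splitness of the centralizer rather than from an inductive argument on $h$. If you want to make your approach self-contained, the content of that appendix (or an equivalent argument producing the scalar-valued vector) is exactly what you are missing.
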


\begin{proof}
Let $Z=k(\sqrt a)$ be such that $A_Z$ is split and $\sigma_Z$
  is hyperbolic. By~\cite[Th.~3.3]{BST}, there exists $g\in A$ such
  that $\sigma(g)=-g$ and $g^2=a$, hence $k(g)\simeq Z$. The
  centralizer of $g$ in $A$ is Brauer-equivalent to $A_Z$, hence it is split.
Therefore,
by~\cite[Appendix]{QT:daca}, there exists a pure quaternion $i\in Q$
such that $i^2=a$, and some elements $\alpha_i\in k^\times$ for $i=1$, \ldots,
$r$, such that $\sigma$ is adjoint to the skew-hermitian form
over $(Q,\ba)$ defined by $h=\qform{i\alpha_1,\dots,i\alpha_r}$, where
$\ba$ is the canonical involution of $Q$. The lemma follows, with
$\rho=\Int(i)\circ\ba$ and $\varphi=\qform{\alpha_1,\dots,\alpha_r}$.  
\end{proof} 

\begin{remark}
  \label{rem:subst}
  For $A=M_r(Q)$, the field $\fSB A$ is a purely transcendental
  extension of $\fSB Q$, hence for involutions $\sigma$ and $\sigma'$
  on $A$ we have $\sigma_{\fSB A}\simeq\sigma'_{\fSB A}$ if and only
  if $\sigma_{\fSB Q}\simeq\sigma'_{\fSB Q}$
  by~\cite[Lemma~4.1]{QT:Arasondeg8}. Therefore, to establish
  Theorem~\ref{thm:isoneg}\eqref{item:isog} and \eqref{item:isoh} we may
  (and will) substitute $\fSB Q$ for $\fSB A$.
\end{remark}

\begin{lem}
\label{both.lem}
Let $(A,\sigma)=(Q,\rho)\otimes\Ad(\varphi)$ be as in the previous
lemma. Any orthogonal involution $\sigma'$ on $A=M_r(Q)$ such that  
$\sigma'_{\fSB Q}\simeq \sigma_{\fSB Q}$ also decomposes as
$\sigma'\simeq\rho\otimes\ad({\varphi'})$ for some $r$-dimensional
quadratic form $\varphi'$ over $k$.  
\end{lem}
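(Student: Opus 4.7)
The strategy is to apply Lemma~\ref{dec.lem} to $\sigma'$ and then identify the resulting quaternion factor with $\rho$. By the construction in Lemma~\ref{dec.lem}, we may write $\rho = \Int(i) \circ \ba$ for some pure quaternion $i \in Q$ with $i^2 = a$, so that the discriminant of $\rho$ is $Z = k(\sqrt{a})$; in particular $Z$ embeds in $Q$, hence splits $Q$, and therefore splits $A = M_r(Q)$. The main task is to prove that $\sigma'_Z$ is hyperbolic; once this is achieved, Lemma~\ref{dec.lem} applied to $\sigma'$ will yield $(A, \sigma') \simeq (Q, \rho') \otimes \Ad(\varphi')$ for some orthogonal involution $\rho'$ on $Q$ of discriminant $Z$ and some $r$-dimensional quadratic form $\varphi'$ over $k$.

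To establish hyperbolicity of $\sigma'_Z$, the idea is to lift the isomorphism $\sigma_{\fSB Q} \simeq \sigma'_{\fSB Q}$ to the compositum $Z \cdot \fSB Q$. Since $Q_Z$ is split, the Severi--Brauer variety of $Q_Z$ is $\mathbb{P}^1_Z$, so $Z \cdot \fSB Q$ is a rational function field $Z(t)$ over $Z$. Because $\sigma_Z$ is hyperbolic, so is $\sigma_{Z(t)}$; the extended isomorphism $\sigma_{Z(t)} \simeq \sigma'_{Z(t)}$ then forces $\sigma'_{Z(t)}$ to be hyperbolic as well. As $A_Z$ is split, $\sigma'_Z$ is adjoint to a quadratic form over $Z$, and hyperbolicity descends from $Z(t)$ to $Z$ by the standard injectivity of $W(Z) \to W(Z(t))$. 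Hence $\sigma'_Z$ is hyperbolic.

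Applying Lemma~\ref{dec.lem} to $\sigma'$ over $Z$ thus produces a decomposition $(A, \sigma') \simeq (Q, \rho') \otimes \Ad(\varphi')$, with $\rho'$ an orthogonal involution on $Q$ of discriminant $Z$. Orthogonal involutions on the quaternion algebra $Q$ are classified by their discriminant (the degree~$2$ case of Theorem~\ref{thm:iso}\eqref{item:isob}), so $\rho \simeq \rho'$. Transporting the decomposition along such an isomorphism $(Q, \rho') \simeq (Q, \rho)$ gives $(A, \sigma') \simeq (Q, \rho) \otimes \Ad(\varphi')$, as required.

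The most delicate point is the descent of hyperbolicity through the extension $Z(t)/Z$; once the purely transcendental nature of $Z \cdot \fSB Q$ over $Z$ is recognized (thanks to $Z$ splitting $Q$), the remainder follows formally from Lemma~\ref{dec.lem} and the classification of orthogonal involutions on a quaternion algebra.
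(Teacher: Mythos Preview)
Your proof is correct and follows essentially the same route as the paper's: recognize that $Z\cdot\fSB Q$ is purely transcendental over $Z$ (since $Q_Z$ is split), descend the relevant information from $Z\cdot\fSB Q$ down to $Z$, and then invoke Lemma~\ref{dec.lem}. The only minor difference is that the paper descends the full isomorphism $\sigma_Z\simeq\sigma'_Z$ (citing \cite[Lemma~4.1]{QT:Arasondeg8}), whereas you descend only the hyperbolicity of $\sigma'_Z$ via the injectivity $W(Z)\hookrightarrow W(Z(t))$; either suffices, and you are slightly more explicit about why the resulting $\rho'$ may be taken equal to $\rho$.
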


\begin{proof}
Let $Z$ be the quadratic extension of $k$ defined by the discriminant
of $\rho$, and denote by $Z_Q= Z\cdot\fSB Q$ the corresponding
quadratic extension of $\fSB Q$.  
Since $Q_Z$ is split, $Z_Q$ is a purely transcendental extension of
$Z$. Therefore, $\sigma_{\fSB Q}\simeq \sigma'_{\fSB Q}$ implies
$\sigma_{Z_Q}\simeq \sigma'_{Z_Q}$, which in turn implies
$\sigma_Z\simeq \sigma'_{Z}$ by~\cite[Lemma~4.1]{QT:Arasondeg8}. So
$(A,\sigma')$ also is split and hyperbolic over $Z$ and the previous
lemma finishes the proof.  
\end{proof} 

With this in hand, Question~\ref{q:iso} for involutions that become
hyperbolic over a quadratic splitting field of $Q$ boils down to a
quadratic form question over $\fSB Q$, as the next proposition shows:

\begin{prop} 
\label{criterion.prop}
Let $\rho$ be an orthogonal involution of $Q$ of discriminant $k(\sqrt
a)$.  
Given two $r$-dimensional quadratic forms $\varphi$ and $\varphi'$
over $k$, consider the orthogonal involutions
$\sigma=\rho\otimes\ad(\varphi)$ and
$\sigma'=\rho\otimes\ad({\varphi'})$ of $A=M_r(Q)$.  
\begin{enumerate}
\renewcommand{\theenumi}{\roman{enumi}}
\item
\label{isogen}
$\sigma'_{\fSB Q}\simeq\sigma_{\fSB Q}$ if and only if there exists
$\lambda\in \fSB Q^\times$ such that 
\[
\pform{a}\varphi'_{\fSB Q}\simeq\qform{\lambda}\pform{a}\varphi_{\fSB Q}.
\] 
\item 
\label{isobase}
$\sigma'\simeq\sigma$ if and only if there exists $\nu\in k^\times$
such that 
\[
\pform{a}\varphi'_{\fSB Q}\simeq\qform{\nu}\pform{a}\varphi_{\fSB Q}.
\] 
\end{enumerate} 
\end{prop}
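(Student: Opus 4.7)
My plan is to translate both equivalences into statements about quadratic forms over $\fSB Q$ via the Morita equivalence that arises when $Q$ splits, and for the hard direction of (ii) to descend back to $k$ using the injectivity theorem of Dejaiffe~\cite{Dej} and Parimala--Sridharan--Suresh~\cite[Prop.~3.3]{PSS} on Witt groups of skew-hermitian forms over $(Q,\ba)$ already invoked in the introduction.

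To set up the dictionary, I would fix an isomorphism $Q_{\fSB Q}\simeq M_2(\fSB Q)$ under which $\ba$ becomes the canonical symplectic involution. Since $\rho$ has discriminant $k(\sqrt a)$, the orthogonal involution $\rho_{\fSB Q}$ is adjoint to some $2$-dimensional quadratic form of discriminant $a$, which we can write as $\qform{b}\pform{a}_{\fSB Q}$ for some $b\in\fSB Q^\times$. Writing $\sigma=\ad(h)$ and $\sigma'=\ad(h')$ for the skew-hermitian forms $h=\qform{i\alpha_1,\dots,i\alpha_r}$ and $h'=\qform{i\alpha'_1,\dots,i\alpha'_r}$ over $(Q,\ba)$ supplied by Lemma~\ref{dec.lem} (where $\varphi=\qform{\alpha_1,\dots,\alpha_r}$ and $\varphi'=\qform{\alpha'_1,\dots,\alpha'_r}$), Morita equivalence over $\fSB Q$ sends $h_{\fSB Q}$ and $h'_{\fSB Q}$ to the quadratic forms $\qform{b}\pform{a}\varphi_{\fSB Q}$ and $\qform{b}\pform{a}\varphi'_{\fSB Q}$ respectively, and scalar multiplication by $\mu$ of a skew-hermitian form translates into multiplying the corresponding quadratic form by $\qform{\mu}$. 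The key observation is that both quadratic forms carry the same factor $\qform{b}$, so this factor will cancel in any similarity comparison.

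Part (i) is then immediate: $\sigma_{\fSB Q}\simeq\sigma'_{\fSB Q}$ is equivalent to the similarity of their adjoint quadratic forms over $\fSB Q$, and after cancelling $\qform{b}$ this reads as the existence of $\lambda\in\fSB Q^\times$ with $\pform{a}\varphi'_{\fSB Q}\simeq\qform{\lambda}\pform{a}\varphi_{\fSB Q}$. For part (ii), the isomorphism $\sigma\simeq\sigma'$ amounts to a similarity $h'\simeq\qform{\mu}h$ of skew-hermitian forms over $(Q,\ba)$ with $\mu\in k^\times$, so the ``only if'' direction follows from the dictionary by restricting to $\fSB Q$ and taking $\nu=\mu$. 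The main obstacle is the converse. Starting from $\pform{a}\varphi'_{\fSB Q}\simeq\qform{\nu}\pform{a}\varphi_{\fSB Q}$ with $\nu\in k^\times$, the dictionary yields an isometry $h'_{\fSB Q}\simeq\qform{\nu}h_{\fSB Q}$ of skew-hermitian forms over $(Q,\ba)_{\fSB Q}$, in particular an equality in the Witt group. The Dejaiffe--Parimala--Sridharan--Suresh injectivity of scalar extension for Witt groups of skew-hermitian forms over $(Q,\ba)$ then forces $h'$ and $\qform{\nu}h$ to agree in the Witt group already over $k$; since the two forms have the same rank, they must in fact be isometric, giving $\sigma\simeq\sigma'$.
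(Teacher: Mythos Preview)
Your proposal is correct and follows essentially the same route as the paper: both arguments identify $\sigma_{\fSB Q}$ and $\sigma'_{\fSB Q}$ with the adjoint involutions of $\pform{a}\varphi_{\fSB Q}$ and $\pform{a}\varphi'_{\fSB Q}$ (the paper does this directly via $(Q,\rho)_{\fSB Q}\simeq\Ad(\pform{a}_{\fSB Q})$, you via the skew-hermitian Morita dictionary, picking up a harmless factor $\qform{b}$ that cancels), and both settle~(ii) by translating $\sigma\simeq\sigma'$ into a similarity of the skew-hermitian forms $\qform{i}\varphi$, $\qform{i}\varphi'$ over $(Q,\ba)$ and invoking the Dejaiffe--Parimala--Sridharan--Suresh injectivity of $W^-(Q,\ba)\to W^-(Q_{\fSB Q},\ba)$ to descend the isometry from $\fSB Q$ to $k$.
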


\begin{proof}
Since orthogonal involutions on a quaternion algebra are classified by
their discriminant~\cite[(7.4)]{KMRT}, and $k$ is quadratically closed
in $\fSB Q$, there exists an isomorphism 
\[
(Q,\rho)_{\fSB Q}\simeq \Ad({\pform{a}_{\fSB Q}}).
\] 
Hence, for every quadratic form $\psi$ over $k$, we have 
\[
\bigl((Q,\rho)\otimes \ad(\psi)\bigr)_{\fSB Q}\simeq
\Ad({{\pform{a}\psi}_{\fSB Q}}).
\]
In particular, $\sigma_{\fSB Q}$ and $\sigma'_{\fSB Q}$ are
respectively adjoint to the quadratic forms  
$\pform{a}\varphi_{\fSB Q}$ and $\pform{a}\varphi'_{\fSB
  Q}$. Assertion~(\ref{isogen}) follows immediately.  

Since $Q$ is a division algebra, $k(\sqrt a)$ is a
field, and $\rho=\Int(i)\circ\ba$ for some pure quaternion $i\in Q$ such that $i^2\equiv a\bmod k^{\times2}$. 
The involution
$\sigma$ (resp.\ $\sigma'$) is adjoint to the skew-hermitian form
$\qform i \varphi$ (resp.\ $\qform i \varphi'$) over
$(Q,\ba)$. Therefore, $\sigma$ and $\sigma'$ are isomorphic if and
only if there exists $\nu\in k^\times$ such that
\begin{equation}
  \label{eq:simil}
  \qform i \varphi' \simeq \qform{\nu i}\varphi.
\end{equation}
Since the scalar extension map $W^-(Q,\ba)\to W^-(Q_{\fSB Q},\ba)$ is
injective (see \cite{Dej} or \cite[Prop.~3.3]{PSS}), and since
$W^-(Q_{\fSB Q},\ba)\simeq W(\fSB Q)$ by a Morita-equivalence that
carries $\qform i_{\fSB Q}$ to $\pform a_{\fSB Q}$, the 
existence of the isomorphism~\eqref{eq:simil} is equivalent to
\[
\pform a \varphi'_{\fSB Q} \simeq \qform\nu\pform a \varphi_{\fSB Q}.
\qedhere
\]
\end{proof} 

We use Proposition~\ref{criterion.prop} to prove \eqref{item:isod} and
\eqref{item:isoe} in Theorem~\ref{thm:iso}.

\begin{prop}
\label{totdec.prop}
Let $A=M_r(Q)$ be endowed with two orthogonal involutions $\sigma$ and
$\sigma'$.  
We assume that either $(A,\sigma)$ is totally decomposable, or $r$ is
odd and there exists a quadratic extension $Z/k$ over which
$(A,\sigma)$ is split and hyperbolic.   
Under any of those two conditions, if $\sigma_{\fSB Q}$ and
$\sigma'_{\fSB Q}$ are isomorphic, then $\sigma\simeq \sigma'$. 
\end{prop}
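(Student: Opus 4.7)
The plan is to reduce both cases to Proposition~\ref{criterion.prop} and then descend the similarity scalar from $\fSB Q^\times$ to $k^\times$ by different arguments in cases~(a) and~(b). In case~(b) the hypothesis of Lemma~\ref{dec.lem} is exactly what is assumed. In case~(a), I would invoke the structure of totally decomposable orthogonal involutions on algebras of index~$2$ (a consequence of Becher's theorem \cite[Th.~2]{Becher}) to produce a decomposition $(A,\sigma)\simeq(Q,\rho_0)\otimes\Ad(\pi)$ with $\pi$ a Pfister form over~$k$; this makes $(A,\sigma)$ split and hyperbolic over the discriminant extension of $\rho_0$, so Lemma~\ref{dec.lem} applies. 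Applying it yields $(A,\sigma)\simeq(Q,\rho)\otimes\Ad(\varphi)$ with $\varphi$ an $r$-dimensional $k$-quadratic form, which in case~(a) may be chosen similar to $\pi$. Lemma~\ref{both.lem} then gives $\sigma'\simeq\rho\otimes\ad(\varphi')$ for some $r$-dimensional $\varphi'$. Writing $Z=k(\sqrt a)$ for the discriminant of $\rho$, Proposition~\ref{criterion.prop}(i) yields $\lambda\in\fSB Q^\times$ with $\pform a\varphi'_{\fSB Q}\simeq\qform\lambda\pform a\varphi_{\fSB Q}$; the conclusion $\sigma\simeq\sigma'$ will follow from part~(ii) once we exhibit $\nu\in k^\times$ satisfying the same relation.

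For case~(a), write $\pform a\varphi=c_0\Pi$, where $\Pi:=\pform a\pi$ is a Pfister form over $k$ and $c_0\in k^\times$. The nonsingular $k$-form $\pform a\varphi'$ represents some $\mu\in k^\times$; via the similarity $\pform a\varphi'_{\fSB Q}\simeq\qform{\lambda c_0}\Pi_{\fSB Q}$ this forces $\mu/(\lambda c_0)\in D(\Pi_{\fSB Q})$. By the roundness of Pfister forms, $D(\Pi_{\fSB Q})=G(\Pi_{\fSB Q})$, and since similarity groups are unchanged by scaling we have $G(\pform a\varphi_{\fSB Q})=G(\Pi_{\fSB Q})$. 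Setting $\nu:=\mu/c_0\in k^\times$, we obtain $\lambda/\nu=\lambda c_0/\mu\in D(\Pi_{\fSB Q})=G(\pform a\varphi_{\fSB Q})$, whence $\qform\lambda\pform a\varphi_{\fSB Q}\simeq\qform\nu\pform a\varphi_{\fSB Q}$, as required.

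For case~(b), I would pass to hermitian forms over $(Z,\iota)$, where $\iota$ is the nontrivial $k$-automorphism of $Z$. Letting $h_\varphi$ and $h_{\varphi'}$ denote the diagonal hermitian forms over $(Z,\iota)$ corresponding to $\varphi$ and $\varphi'$, the trace forms are $\pform a\varphi$ and $\pform a\varphi'$, and by the Jacobson-type correspondence the similarity $\pform a\varphi'_{\fSB Q}\simeq\qform\lambda\pform a\varphi_{\fSB Q}$ translates into a hermitian similarity $h_{\varphi',\fSB Q}\simeq\qform\lambda h_{\varphi,\fSB Q}$ over $(Z_{\fSB Q},\iota)$ with the same scalar. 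Taking determinants in $\fSB Q^\times/N_{Z_{\fSB Q}/\fSB Q}(Z_{\fSB Q}^\times)$ and using that $r$ is odd (so $\lambda^r\equiv\lambda$ modulo norms, as $\lambda^2=N_{Z_{\fSB Q}/\fSB Q}(\lambda)$ for $\lambda\in\fSB Q^\times$), one obtains $\lambda\equiv\nu\pmod{N_{Z_{\fSB Q}/\fSB Q}(Z_{\fSB Q}^\times)}$, where $\nu:=(\det h_{\varphi'})(\det h_\varphi)^{-1}\in k^\times$. Since scaling the underlying $Z_{\fSB Q}$-module by $z$ sends $h_{\varphi,\fSB Q}$ to $\qform{N(z)}h_{\varphi,\fSB Q}$, every norm is a similarity factor, so $\qform\lambda h_{\varphi,\fSB Q}\simeq\qform\nu h_{\varphi,\fSB Q}$. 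Reverting to trace forms gives $\pform a\varphi'_{\fSB Q}\simeq\qform\nu\pform a\varphi_{\fSB Q}$ with $\nu\in k^\times$, and Proposition~\ref{criterion.prop}(ii) concludes.

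The main obstacle I expect is the technical verification in case~(b): the Jacobson-type correspondence between hermitian forms over $(Z,\iota)$ and their trace quadratic forms must be shown to preserve similarity with the correct scalars (in $k^\times$ over $Z$ and in $\fSB Q^\times$ over $Z_{\fSB Q}$), and the computation that every norm from $Z_{\fSB Q}/\fSB Q$ is a similarity factor of the hermitian form has to be carefully verified through the module-scaling action. The case~(a) argument, by comparison, is essentially immediate once the Pfister structure has been identified.
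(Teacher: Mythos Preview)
Your proof is correct. Case~(a) is essentially the paper's argument: both exploit roundness of the Pfister form $\pform a\pi$ to descend the similarity factor; the paper just normalizes so that $\varphi=\pi$ and $\varphi'$ represents~$1$, which yields $\nu=1$ directly. In case~(b) the paper proceeds differently: it first scales $\varphi$ and $\varphi'$ to have trivial discriminant (possible since $r$ is odd) and then compares Clifford invariants of $\pform a\varphi'_{\fSB Q}$ and $\qform\lambda\pform a\varphi_{\fSB Q}$, obtaining $(\lambda,a)=0$ in $\br_2(\fSB Q)$; thus $\qform\lambda\pform a_{\fSB Q}\simeq\pform a_{\fSB Q}$ and one may again take $\nu=1$. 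Your hermitian-determinant argument is the same computation in another language---the class of $\lambda$ modulo $N_{Z_{\fSB Q}/\fSB Q}(Z_{\fSB Q}^\times)$ \emph{is} the symbol $(\lambda,a)$, and after the paper's normalization your $\nu=(\det h_{\varphi'})(\det h_\varphi)^{-1}$ would also become~$1$. Your route avoids citing an explicit Clifford-invariant formula (the paper refers to \cite[V.3]{Lam}) and needs no preliminary normalization of discriminants; the paper's route is shorter once that formula is in hand. The worries you flag at the end are not genuine obstacles: Jacobson's theorem holds over any field of characteristic $\neq2$, the trace form of $\qform\lambda h$ for $\lambda$ in the base field is $\qform\lambda$ times the trace form of $h$, and your scaling computation showing that norms are similarity factors is already complete as written.
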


\begin{proof}
By~\cite[Th.~2]{Becher}, if $(A,\sigma)$ is totally decomposable, it
admits a decomposition  
\[
(A,\sigma)\simeq (Q,\rho)\otimes\ad(\pi),
\] 
for some orthogonal involution $\rho$ of $Q$ and some Pfister
quadratic form $\pi$ over $k$.  
Therefore in both cases there exists a quadratic extension $Z/k$ over
which $(A,\sigma)$ is split and hyperbolic\,: $Z$ is the discriminant
of $\rho$ in the totally decomposable case. In view of
Lemmas~\ref{dec.lem} and~\ref{both.lem}, we may thus assume that
$\sigma=\rho\otimes\ad(\varphi)$ and
$\sigma'=\rho\otimes\ad(\varphi')$ for some quadratic forms $\varphi$
and $\varphi'$ over $k$, and apply Proposition~\ref{criterion.prop}.  

Assume first that $\sigma$ is totally decomposable. Then we may assume
$\varphi=\pi$ is a Pfister form, and modifying it by a scalar if
necessary, we may also assume $\varphi'$ represents $1$. If
$\sigma_{\fSB Q}$ and $\sigma'_{\fSB Q}$ are isomorphic,
assertion~(\ref{isogen}) in Proposition~\ref{criterion.prop} says
that 
\[
\pform{a}\pi_{\fSB Q}\simeq\qform{\lambda}\pform{a}\varphi'_{\fSB Q}
\]
for some $\lambda\in \fSB Q^\times$. But a quadratic form that is
similar to a Pfister form and represents $1$ actually is isomorphic to
this Pfister form. Therefore, we may take $\lambda=1$, so that the
equivalent conditions of assertion~(\ref{isobase}) of
Proposition~\ref{criterion.prop} hold, with 
$\nu=1$. This concludes the proof in this case.  

Assume now that $r$ is odd. Since the quadratic forms $\varphi$ and
$\varphi'$ are $r$-dimensional, and only defined up to a scalar
factor, we may assume they have trivial discriminant. If $\sigma_{\fSB
  Q}$ and $\sigma'_{\fSB Q}$ are isomorphic, assertion~(\ref{isogen})
of Proposition~\ref{criterion.prop}
says that there exists $\lambda\in \fSB Q^\times$ such that  
\[
\pform{a}\varphi'_{\fSB Q}\simeq\qform{\lambda}\pform{a}\varphi_{\fSB
  Q}.
\]
Computing the Clifford invariant of both forms as in~\cite[V.3]{Lam},
and taking into account the fact that $r$ is odd and $\varphi$ and
$\varphi'$ have trivial discriminant, we get
$(\lambda,a)=0\in\br_2(k)$. Therefore, $\lambda$ is represented by
$\pform{a}$, so that 
\[
\qform{\lambda}\pform{a}\simeq\pform{a}.
\]  
Hence, again we may take $\lambda=1$, and the equivalent conditions of
assertion~(\ref{isobase}) of Proposition~\ref{criterion.prop} hold.  
\end{proof}

In the rest of this paper, we use Proposition~\ref{criterion.prop} to produce examples of pairs of orthogonal involutions for which the answer to Question~\ref{q:iso} is negative. 
For this, we will exhibit quadratic forms $\varphi$ and $\varphi'$ defined over $k$ such that 
\[
\pform{a}\varphi'_{\fSB Q}\simeq\qform{\lambda}\pform{a}\varphi_{\fSB
  Q}
\] 
for some $\lambda\in \fSB Q^\times$, but not for any $\lambda\in k^\times$. 
The heuristic idea behind this construction is the
following. Suppose $\varphi_1$, $\varphi_1'$, $\varphi_2$,
$\varphi_2'$ are quadratic forms over $k$ satisfying
assertion~(\ref{isogen}) of Proposition~\ref{criterion.prop} with the
same factor $\lambda\in\fSB{Q}^\times$:
\[
(\pform{a}\varphi'_1)_{\fSB Q}\simeq\qform{\lambda}(\pform{a}\varphi_1)_{\fSB
  Q}
\quad\text{and}\quad
(\pform{a}\varphi'_2)_{\fSB Q}\simeq\qform{\lambda}(\pform{a}\varphi_2)_{\fSB
  Q}.
\]
Then for every $t\in k^\times$ we also have
\[
(\pform{a}\varphi'_1\perp\qform t \pform a \varphi'_2)_{\fSB Q} \simeq
\qform{\lambda}(\pform{a}\varphi_1\perp\qform t\pform a \varphi_2)_{\fSB Q}.
\]
If $\varphi_1$, $\varphi'_1$ and $\varphi_2$, $\varphi'_2$ do not
yield a negative answer to Question~\ref{q:iso}, then we may find
$\nu_1$, $\nu_2\in k^\times$ such that
\[
(\pform a \varphi'_1)_{\fSB Q} \simeq (\qform{\nu_1}\pform a
\varphi_1)_{\fSB Q} 
\quad\text{and}\quad
(\pform a \varphi'_2)_{\fSB Q} \simeq (\qform{\nu_2}\pform a
\varphi_2)_{\fSB Q}.
\]
But if the $\nu_1$ and $\nu_2$ satisfying these equations are
sufficiently distinct, and if $t$ is `generic', there may not be any
$\nu\in k^\times$ such that
\[
(\pform{a}\varphi'_1\perp\qform t \pform a \varphi'_2)_{\fSB Q} \simeq
\qform{\nu}(\pform{a}\varphi_1\perp\qform t\pform a \varphi_2)_{\fSB Q}.
\]

\begin{example}
\label{1.ex}
Let $k_0$ be a field of characteristic different from $2$ and
$k=k_0((a))((t))$ the iterated Laurent series field in two variables
over $k_0$. Pick $b\in k_0^\times$ and let $Q=(a,b)_k$. Assume $b$ is
not a square in $k_0$; then $Q$ is a
division algebra, which admits an orthogonal involution $\rho$ of
discriminant $a$. The function field $\fSB Q$ may be represented as 
$k(X,Y)$ where $X$ and $Y$ satisfy  
\begin{equation}
\label{eq:rep}
X^2-aY^2+ab=0.
\end{equation}
Fix an element $c\in k_0^\times$ and let
\[
\varphi=\pform{b+1}\perp\qform{t}\pform{b+c^2},
\quad\mbox{and}\quad
\varphi'=\pform{b+1}\perp\qform{ct}\pform{b+c^2}.
\]
Write $D_K(\theta)$ for the set of represented values of a quadratic form
$\theta$ over a field $K$. Note that
\[
  (\sqrt b +c)^2-(b+c^2)=2c\sqrt b \quad\text{and}\quad
  \left(\frac{\sqrt b +1}{2\sqrt b}\right)^2 - (b+1)
  \left(\frac1{2\sqrt b}\right)^2=\frac1{2\sqrt b},
\]
hence $(2\sqrt b)^{-1}\in D_{k_0(\sqrt b)}(\pf{b+1})$ and $2c\sqrt
b\in D_{k_0(\sqrt b)}(\pf{b+c^2})$, and therefore
\begin{equation}
\label{eq:th5}
  c=(2\sqrt b)^{-1}(2c\sqrt b)\in k_0^\times\cap \bigl[D_{k_0(\sqrt
    b)}(\pf{b+1})\cdot D_{k_0(\sqrt 
    b)}(\pf{b+c^2})\bigr].
\end{equation}
The following proposition yields a more precise information when
$\rho\otimes\ad(\varphi)$ and $\rho\otimes\ad(\varphi')$ are
isomorphic:

\begin{prop}
  \label{prop:ex1}
  The involutions $\sigma=\rho\otimes\ad(\varphi)$ and
$\sigma'=\rho\otimes\ad(\varphi')$ satisfy $\sigma_{\fSB Q}\simeq
\sigma'_{\fSB Q}$. If $\sigma\simeq\sigma'$, then 
\begin{equation}
  \label{eq:th4}
  c\in \bigl(k_0^\times\cap D_{k_0(\sqrt
    b)}(\pf{b+1})\bigr) \cdot \bigl(k_0^\times\cap D_{k_0(\sqrt
    b)}(\pf{b+c^2})\bigr).  
\end{equation}
\end{prop}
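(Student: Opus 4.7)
The plan is to apply Proposition~\ref{criterion.prop}: both assertions reduce to the existence of a scalar (in $\fSB Q^\times$ for~(i), in $k^\times$ for~(ii)) realizing the similarity $\pf{a}\varphi'_{\fSB Q}\simeq\qform{\lambda}\pf{a}\varphi_{\fSB Q}$. Since $\pf{a}\varphi=\pf{a,b+1}\perp\qform{t}\pf{a,b+c^2}$ and $\pf{a}\varphi'=\pf{a,b+1}\perp\qform{ct}\pf{a,b+c^2}$, a direct computation in $W(\fSB Q)$ shows that such a similarity holds if and only if the two $3$-fold Pfister forms $\pf{\lambda,a,b+1}_{\fSB Q}$ and $\pf{\lambda/c,a,b+c^2}_{\fSB Q}$ both vanish, equivalently $\lambda\in D_{\fSB Q}(\pf{a,b+1})$ and $\lambda/c\in D_{\fSB Q}(\pf{a,b+c^2})$, where $D_F(\pi)$ denotes the value group of a Pfister form over $F$.

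For~(i), I would first establish that the $4$-fold Pfister form $\pf{a,b+1,b+c^2,-c}$ vanishes over $\fSB Q$. The identity~\eqref{eq:th5} places $c$ in $D_{k_0(\sqrt{b})}(\pf{b+1})\cdot D_{k_0(\sqrt{b})}(\pf{b+c^2})\subseteq D_{k_0(\sqrt{b})}(\pf{b+1,b+c^2})$, so the $3$-fold Pfister form $\pf{b+1,b+c^2,-c}$ is hyperbolic over $k_0(\sqrt{b})$. Scharlau's theorem on the kernel of $W(k_0)\to W(k_0(\sqrt{b}))$ gives $\pf{b+1,b+c^2,-c}_{k_0}=\pf{b}\cdot\psi$ in $W(k_0)$ for some $\psi$; multiplying by $\pf{a}$ and extending scalars to $\fSB Q$, where $\pf{a,b}_{\fSB Q}=0$, yields $\pf{a,b+1,b+c^2,-c}_{\fSB Q}=0$. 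A similarity factor $\lambda\in\fSB Q^\times$ satisfying both membership conditions is then extracted by a common-slot argument exploiting that $\pf{a,b+1}$ and $\pf{a,b+c^2}$ share the sub-Pfister $\pf{a}$ and that $\pf{a}_{\fSB Q}$ represents $b$.

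For~(ii), suppose $\sigma\simeq\sigma'$, so a scalar $\nu\in k^\times$ exists satisfying the two membership conditions above. The vanishings $\pf{\nu,a,b+1}_{\fSB Q}=0$ and $\pf{\nu/c,a,b+c^2}_{\fSB Q}=0$ translate, via the kernel result $\ker(H^3(k,\mu_2)\to H^3(\fSB Q,\mu_2))=(a)\cup(b)\cup H^1(k,\mu_2)$, into equalities of symbols in $H^3(k,\mu_2)$. Exploiting the Laurent series decomposition of $H^3(k,\mu_2)$ induced by the $a$-adic and $t$-adic residues of $k=k_0((a))((t))$, and writing $\nu\equiv\nu_0 a^i t^j$ modulo squares with $\nu_0\in k_0^\times$ and $i,j\in\{0,1\}$, the $(a)\cup(t)\cup H^1(k_0)$-component of these equalities generically forces $j=0$ (using that $b$ is not a square in $k_0$ and the related genericity $b(b+1),b(b+c^2)\notin k_0^{\times 2}$), while the $(a)\cup H^2(k_0)$-component yields $(-1)^i\nu_0\in D_{k_0(\sqrt{b})}(\pf{b+1})$ and $(-1)^i\nu_0/c\in D_{k_0(\sqrt{b})}(\pf{b+c^2})$. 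Setting $\lambda_1:=(-1)^i\nu_0$ and $\lambda_2:=c/\lambda_1=(\lambda_1/c)^{-1}$, and using that Pfister value groups are closed under inversion, I obtain $c=\lambda_1\lambda_2$ with $\lambda_1\in k_0^\times\cap D_{k_0(\sqrt{b})}(\pf{b+1})$ and $\lambda_2\in k_0^\times\cap D_{k_0(\sqrt{b})}(\pf{b+c^2})$, which is exactly~\eqref{eq:th4}.

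The main obstacle I anticipate is the extraction step in assertion~(i): Scharlau's transfer cleanly yields the vanishing of the $4$-fold Pfister form, but producing a concrete $\lambda\in\fSB Q^\times$ witnessing both Pfister-form memberships is delicate. The natural factorization of $c$ from~\eqref{eq:th5} lives over $\fSB Q(\sqrt{b})$ rather than $\fSB Q$, and since Galois descent for Pfister value groups fails in general, one must invoke the linking of $\pf{a,b+1}$ and $\pf{a,b+c^2}$ along $\pf{a}$, combined with the fact that $\pf{a}_{\fSB Q}$ represents $b$, to produce a descended factor in $\fSB Q^\times$.
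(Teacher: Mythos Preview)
Your proposal has genuine gaps in both assertions.

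For the first claim, you never actually produce the similarity factor $\lambda$: the ``common-slot argument exploiting that $\pf{a,b+1}$ and $\pf{a,b+c^2}$ share the sub-Pfister $\pf{a}$'' is only gestured at, and you yourself flag this extraction as ``the main obstacle''. The paper bypasses the whole issue with a direct computation on the conic. From $X^2-aY^2+ab=0$ one has
\[
X^2-a(Y+1)^2+a(b+1)=-2aY,\qquad X^2-a(Y+c)^2+a(b+c^2)=-2aYc,
\]
so $-2aY\in D_{\fSB Q}(\pf{a,b+1})$ and $-2aYc\in D_{\fSB Q}(\pf{a,b+c^2})$; taking $\lambda=-2aY$ gives the required similarity immediately, with no descent or linkage needed.

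For the second claim, two problems compound. First, your ``if and only if'' in the opening paragraph is not valid: from $\pf{a}\varphi'_{\fSB Q}\simeq\qf{\nu}\pf{a}\varphi_{\fSB Q}$ one only obtains
\[
\pf{\nu,a,b+1}+\qf{ct}\pf{\nu/c,a,b+c^2}\in\pf{a,b}W(k),
\]
and when $\nu$ involves $t$ the $t$-adic residues mix the two summands, so the separate vanishings $\pf{\nu,a,b+1}_{\fSB Q}=0$ and $\pf{\nu/c,a,b+c^2}_{\fSB Q}=0$ are \emph{not} forced. Second, your treatment of the case $j=1$ invokes ``the related genericity $b(b+1),\,b(b+c^2)\notin k_0^{\times2}$'', but these conditions are nowhere in the hypotheses: the proposition is stated for arbitrary $b\in k_0^\times\setminus k_0^{\times2}$ and $c\in k_0^\times$. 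The paper does not try to exclude this case; instead, after reducing to $\nu=t\nu_0$ with $\nu_0\in k_0^\times$, successive $t$-adic and $a$-adic first residues yield
\[
\pf{b+1}+\qf{-\nu_0}\pf{b+c^2}\in\pf{b}W(k_0),
\]
and comparing discriminants forces $b+1\equiv b+c^2\bmod k_0(\sqrt b)^{\times2}$. Then $D_{k_0(\sqrt b)}(\pf{b+1})=D_{k_0(\sqrt b)}(\pf{b+c^2})$, so the factorisation~\eqref{eq:th5} already gives $c\in k_0^\times\cap D_{k_0(\sqrt b)}(\pf{b+1})$, and~\eqref{eq:th4} follows with the trivial second factor~$1$. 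Your argument for the case $j=0$ is essentially the paper's Case~2.
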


\begin{proof}
To establish the first claim, observe that 
$\pform{a,b+1}_{\fSB Q}$ represents  
\[
X^2-a(Y+1)^2+a(b+1)=-2aY,
\] 
while $\pform{a,b+c^2}_{\fSB Q}$ represents 
\[
X^2-a(Y+c)^2+a(b+c^2)=-2aYc.
\]
Therefore $-2aY$ (respectively $-2aYc$) is a similarity factor of
$\pform{a,b+1}_{\fSB Q}$ (respectively $\pform{a,b+c^2}_{\fSB Q})$, and we
get 
\[
\pform{a,b+1}\simeq\qform{-2aY}\pform{a,b+1} \quad\text{and}\quad
\qform c \pform{a,b+c^2} \simeq \qform{-2aY}\pform{a,b+c^2},
\]
hence
\[
\pform{a}\varphi'_{\fSB Q}\simeq \qform{-2aY}\pform{a}\varphi_{\fSB
  Q}.
\]
By Proposition~\ref{criterion.prop}, this implies $\sigma_{\fSB
  Q}\simeq \sigma'_{\fSB Q}$.  

To prove the second claim, assume
$\sigma\simeq\sigma'$. Proposition~\ref{criterion.prop}\eqref{isobase}
yields $\nu\in k^\times$ such that the forms 
$\pform{a}\varphi'_{\fSB Q}$ and $\qform{\nu}\pform{a}\varphi_{\fSB
  Q}$ are isomorphic. Since the kernel of the  
restriction map $W(k)\rightarrow W(\fSB Q)$ is $\pform{a,b}W(k)$ (see
\cite[Ch.~X, Cor.~4.28]{Lam}), it follows that
\begin{equation}
  \label{eq:th}
  \pf{\nu,a,b+1} + \qf{ct,-\nu t}\pf{a,b+c^2}\in \pf{a,b}W(k).
\end{equation}

Since $k$ is the field of iterated Laurent series in $a$ and $t$ over
$k_0$, every 
square class in $k$ is represented by an element of the form $\nu_0$,
$a\nu_0$, $t\nu_0$ or $at\nu_0$ for some $\nu_0\in k_0^\times$; see
\cite[Ch.~VI, Cor.~1.3]{Lam}. If $\nu_1=-a\nu$, then
$\pform{\nu_1,a}\simeq\pform{\nu,a}$ and $\qform{-\nu_1t}\pform a
\simeq \qform{-\nu t}\pform a$. Therefore, substituting $-a\nu$ for
$\nu$ if $\nu=a\nu_0$ or $at\nu_0$, we may assume $\nu=\nu_0$ or
$t\nu_0$ with $\nu_0\in k_0^\times$. We consider these two cases
separately.
\smallbreak

\noindent\textbf{Case 1:} Suppose $\nu=t\nu_0$ with $\nu_0\in
k_0^\times$. Taking the first residue of~\eqref{eq:th} for the
$t$-adic valuation, and the first residue of the
resulting relation for the $a$-adic valuation, we obtain
\[
\pf{b+1}+\qf{-\nu_1}\pf{b+c^2}\in \pf b W(k_0).
\]
Comparing discriminants yields $(b+1)(b+c^2)\in
k_0^{\times2}\cup(bk_0^{\times2})$, hence $b+1\equiv b+c^2\bmod
k_0(\sqrt b)^{\times2}$. Then~\eqref{eq:th5} yields \eqref{eq:th4}.
\smallbreak

\noindent\textbf{Case 2:} Suppose $\nu\in k_0^\times$. Taking the
residues of~\eqref{eq:th} for the $t$-adic valuation, we obtain
\[
\pform{\nu,a,b+1}\in\pform{a,b}W\bigl(k_0((a))\bigr)
\quad\text{and}\quad
\qform{c,-\nu}\pform{a,b+c^2}\in\pform{a,b}W\bigl(k_0((a))\bigr).
\]
We next take the first residue for the $a$-adic valuation, and get
\[
\pf{\nu,b+1}\in\pf b W(k_0) \quad\text{and}\quad
\qf{c,-\nu}\pf{b+c^2}\in \pf b W(k_0).
\]
It follows that the forms $\pf{\nu,b+1}$ and $\pf{\nu c,b+c^2}$ become
hyperbolic over $k_0(\sqrt b)$. This means that $\nu$ is
represented by the form $\pf{b+1}$ over $k_0(\sqrt b)$, and $\nu c$ by
the form $\pf{b+c^2}$ over $k_0(\sqrt b)$, so the equation
$c=\nu^{-1}(\nu c)$ yields~\eqref{eq:th4}.
\end{proof}

To conclude, it remains to find fields $k_0$ and elements $b$, $c\in
k_0^\times$ such that \eqref{eq:th4} does not hold. Quadratic
extensions for which the `Common Value Property' investigated in
\cite{STW} (see also \cite[\S6]{HT}) fails yield such examples:
\begin{itemize}
\item 
we may take $k_0=\mathbb{Q}(b)$, where $b$ is an indeterminate, and
$c=2$: see \cite[Rem.~5.4]{STW};
\item
we may take $k_0=\ell(b,c)$, where $b$, $c$ are independent
indeterminates over an arbitrary field $\ell$ of characteristic~$0$: see
\cite[Rem.~5.10]{STW}. 
\end{itemize}
\end{example}

\begin{example}
\label{2.ex}
It is easy to modify Example~\ref{1.ex} to obtain nonisomorphic
orthogonal involutions on $M_6(Q)$ with trivial discriminant and
trivial $e_2$-invariant that are isomorphic after generic splitting.
From~\eqref{eq:rep}, derive
\[
X^2-a(Y+1)^2=-2aY-a(b+1)
\]
and
\[
-(b+1)(b+c^2)\left(\left(\frac X{b+c^2}\right)^2 -
  a\left(\frac{Y+c}{b+c^2}\right)^2\right) = -(b+1)\left(
  \frac{-2acY}{b+c^2} - a\right).
\]
Summing these two equations, we see that $\pf{a,(b+1)(b+c^2)}$
represents
\[
-2aY-a(b+1)+(b+1)\left(\frac{2acY}{b+c^2}+a\right) =
-2aYc',\qquad\text{with }c'=1-\frac{(b+1)c}{b+c^2}.
\]
If $c'=0$, then $b+c^2=(b+1)c$ and $\pf{a,(b+1)(b+c^2)}=\pf{a,c}$ is
hyperbolic. Since $a$ is an indeterminate over $k_0$, it follows that
$c\in k_0^{\times2}$, hence~\eqref{eq:th4} trivially holds. This is a
contradiction; therefore $c'\neq0$ and
\[
\qform{-2aYc'}\pform{a,(b+1)(b+c^2)}\simeq \pform{a,(b+1)(b+c^2)}.
\]  
Let $k_1=k((u))$, where $u$ is a new indeterminate, and
$Q_1=Q\otimes_kk_1$. 
Consider the following quadratic forms over $k_1$:
\begin{align*}
\psi& =\pform{b+1}\perp\qform{t}\pform{b+c^2}\perp
      \qform{u}\pform{(b+1)(b+c^2)},\\
\psi'& =\pform{b+1}\perp\qform{ct}\pform{b+c^2}\perp
       \qform{c'u}\pform{(b+1)(b+c^2)}
\end{align*}
and the involutions $\tau=\rho\otimes\ad(\psi)$ and
$\tau'=\rho\otimes\ad(\psi')$ over
$M_6(Q_1)$. Proposition~\ref{criterion.prop}\eqref{isogen} shows that they
satisfy  
\[
\tau_{\fSB{Q_1}}\simeq \tau'_{\fSB{Q_1}},
\]
because
\[
\pform{a}\psi'_{\fSB{Q}}\simeq \qform{-2aY}\pform{a}\psi_{\fSB{Q}}.
\]

It remains to prove that $\tau$ and $\tau'$ are not isomorphic. Using
again Proposition~\ref{criterion.prop}\eqref{isobase}, we need to
prove that there is no $\nu\in k_1^\times$ such that
\begin{multline*}
\pf{\nu,a,b+1}+\qf{ct,-\nu t}\pf{a,b+c^2}+\qf{c'u,-\nu
  u}\pf{a,(b+1)(b+c^2)}\\
 \in \pf{a,b} W\bigl(k((u))\bigr).
\end{multline*}
We may assume $\nu\in k^\times$ or $\nu=u\nu_1$ with $\nu_1\in
k^\times$. If $\nu\in k^\times$, taking the first residue for the
$u$-adic valuation yields \eqref{eq:th}, which has no solution. If
$\nu=u\nu_1$ with $\nu_1\in k^\times$, taking the first residue
yields
\begin{equation}
\label{eq:th7}
\pf{a,b+1}+\qf{ct}\pf{a,b+c^2}+\qf{-\nu_1}\pf{a,(b+1)(b+c^2)} \in
\pf{a,b}W(k).
\end{equation}
Recall $k=k_0((a))((t))$, so every 
square class in $k$ is represented by an element of the form $\nu_0$,
$a\nu_0$, $t\nu_0$ or $at\nu_0$ for some $\nu_0\in
k_0^\times$. Substituting $-a\nu_1$ for
$\nu_1$ if $\nu_1=a\nu_0$ or $at\nu_0$, we may assume $\nu_1=\nu_0$ or
$t\nu_0$ with $\nu_0\in k_0^\times$. If $\nu_1=t\nu_0$, then taking
the first residue of~\eqref{eq:th7} for the $t$-adic valuation and for
the $a$-adic valuation in the resulting relation, we obtain
$\pform{b+1}\in\pform b W(k_0)$. This implies $b+1\in k_0(\sqrt
b)^{\times2}$, hence \eqref{eq:th4} holds, a contradiction. If
$\nu_1\in k_0^\times$, then we take the second residue
of~\eqref{eq:th7} for the $t$-adic valuation and the first residue for
the $a$-adic valuation, and find $\qform c\pform{b+c^2}\in\pform b
W(k_0)$, hence $b+c^2\in k_0(\sqrt b)^{\times2}$, which also yields a
contradiction. Therefore $\tau$ and $\tau'$ are not isomorphic.  
\end{example}

\bibliographystyle{plain}
\bibliography{QTPfister}

\end{document}